\documentclass[a4paper]{amsart}

\usepackage{amsmath,amstext,amssymb,mathrsfs,amscd,amsthm,indentfirst}
\usepackage{amsfonts}

\usepackage{booktabs}
\usepackage{verbatim}
\usepackage{enumerate}

%\usepackage[small,nohug,heads=LaTeX]{diagrams}
%\diagramstyle[labelstyle=\scriptstyle]
\usepackage{graphicx}
\numberwithin{equation}{section}

%%%%%%%%%%%%%%%%
%%  ORW's macros
%%%%%%%%%%%%%%%%

\newcommand{\bQ}{\mathbb{Q}}
\newcommand{\bR}{\mathbb{R}}

\newcommand{\bZ}{\mathbb{Z}}

\renewcommand{\phi}{\varphi}

\newcommand{\MSO}{\mathbf{MSO}}
\newcommand{\MT}[2]{\mathbf{MT #1}(#2)}

\newcommand\lra{\longrightarrow}

\newcommand\Diff{\mathrm{Diff}}

\newcommand\Th{\mathrm{Th}}
\newcommand\colim{\operatorname*{colim}}
\newcommand\hocolim{\operatorname*{hocolim}}

\newcommand\Ker{\operatorname*{Ker}}

%\newarrow{DashTo}{}{dash}{}{dash}>
%\renewcommand{\ell}{l}

%%%%%%%%%%%%%%%
%%  SG's macros
%%%%%%%%%%%%%%%
\newcommand{\ph}{\mathrm{ph}} % Pontryagin character class

\newcommand{\R}{\bR}

\newcommand{\Hom}{\mathrm{Hom}}

\renewcommand{\epsilon}{\varepsilon}

\newcommand{\Gr}{\mathrm{Gr}}

\newcommand{\CircNum}[1]{\ooalign{\hfil\raise .00ex\hbox{\scriptsize #1}\hfil\crcr\mathhexbox20D}}

\mathchardef\ordinarycolon\mathcode`\:
\mathcode`\:=\string"8000
\begingroup \catcode`\:=\active
  \gdef:{\mathrel{\mathop\ordinarycolon}}
\endgroup

%------------------
\usepackage[all]{xy}
\SelectTips{eu}{10}
\CompileMatrices
% Til automatisk at centrere diagrammer i forhold til ligningsnummeret

\usepackage{amsthm}
\theoremstyle{plain}

\newtheorem{theorem}{Theorem}[section]

\newtheorem{proposition}[theorem]{Proposition}
\newtheorem{lemma}[theorem]{Lemma}

\newtheorem{corollary}[theorem]{Corollary}

\theoremstyle{definition}
\newtheorem{definition}[theorem]{Definition}

\newtheorem{claim}[theorem]{Claim}

\theoremstyle{remark}

\newtheorem{remark}[theorem]{Remark}

\newtheorem*{remark*}{Remark}

%\usepackage[hypertexnames=false]{hyperref}

%-------------------

\title[Characteristic classes of manifold bundles]{Detecting and realising characteristic classes of manifold bundles}

\author{S{\o}ren Galatius}
\thanks{S. Galatius was partially supported by NSF grant DMS-1105058, and both authors were supported by ERC Advanced Grant No.\ 228082, and the Danish National Research Foundation (DNRF) through the Centre for Symmetry and Deformation.}
\email{galatius@stanford.edu}
\address{Department of Mathematics\\
	Stanford University\\
	Stanford CA, 94305}

\author{Oscar Randal-Williams}
%\thanks{O.\ Randal-Williams was supported by ERC Advanced Grant No.\ 228082, and the Danish National Research Foundation (DNRF) through the Centre for Symmetry and Deformation.}
\email{o.randal-williams@dpmms.cam.ac.uk}
\address{DPMMS\\
%Centre for Mathematical Sciences\\
Wilberforce Road\\
Cambridge CB3 0WB\\
UK}

%\date{\today}
\subjclass[2010]{
57R22,  %Topology of vector bundles and fiber bundles
57R65,  %Surgery and handlebodies
57R20,  %Characteristic classes and numbers
55P47}  %Infinite loop spaces

%\keywords{}

\begin{document}
\begin{abstract}
  We apply our earlier work on the higher-dimensional analogue of the
  Mumford conjecture to two questions.  Inspired by work of Ebert
  (\cite{Ebert09}, \cite{Ebert09Independence}) we prove non-triviality
  of certain characteristic classes of bundles of smooth closed
  manifolds.  Inspired by work of Church--Farb--Thibault and
  Church--Crossley--Giansiracusa (\cite{CFT}, \cite{CCG}) we
  investigate the dependence of characteristic classes of bundles on
  characteristic numbers of its fibre, total space and base space.
\end{abstract}
\maketitle

\section{Introduction and statement of results}
\label{sec:intr-stat-results}

A smooth bundle of closed oriented $d$-manifolds $\pi : E \to B$ is a
proper submersion with $d$-dimensional fibres, together with an
orientation of the \emph{vertical tangent bundle} $T_\pi E =
\Ker(D\pi)$.  A characteristic class of such bundles associates to
this data an element in $H^*(B)$ which is natural in the bundle.  A
useful way to define such classes goes via the \emph{parametrised
  Pontryagin--Thom construction}, which to each $\pi: E \to B$
associates a map
\begin{equation*}
  \overline{\alpha}_\pi: B\lra \Omega^\infty_0 \MT{SO}{d},
\end{equation*}
well defined up to homotopy and natural with respect to pull back of
bundles.  We recall the definition of the space $\Omega^\infty \MT{SO}{d}$
and the construction of $\overline{\alpha}_\pi$ below.  Each
cohomology class $c \in H^*(\Omega^\infty_0\MT{SO}{d})$ gives a
characteristic class $\overline{\alpha}_\pi^*(c) \in H^*(B)$, but it
is far from clear whether these characteristic classes are
non-trivial.  This is the \emph{detection question}, i.e.\ if $c \neq
0$, does there exist a bundle $\pi: E \to B$ with
$\overline{\alpha}_\pi^*(c) \neq 0 \in H^*(B)$?  When cohomology is
taken with rational coefficients, Ebert
(\cite{Ebert09Independence}, \cite{Ebert09}) has obtained a complete
answer for all $d$.  (He proves that the answer is ``yes'' when $d$ is
even and ``no'' when $d$ is odd.)  In this paper we consider the
detection question with arbitrary coefficients.  We shall give an
affirmative answer when $d$ is an even number greater than 4, and also
obtain more control over the detecting bundles.  The following is our
main result in this direction.
\begin{theorem}\label{thm:main}
  Let $2n \neq 4$ and $f \in \Omega_{2n}^{SO}$ be a bordism class.
  Let $k$ be an abelian group and $c \in H^*(\Omega^\infty_0
  \MT{SO}{2n};k)$ be a non-zero class. Then there exists a smooth
  bundle of closed oriented $2n$-manifolds $\pi: E \to B$ such that
  \begin{enumerate}[(i)]
  \item $\overline{\alpha}_\pi^*(c) \neq 0 \in H^*(B;k)$,
  \item\label{item:2} the fibres of $\pi$ lie in the bordism class $f$,
  \item\label{item:1} there is a manifold $M$ with boundary, such that
    $(M, \partial M)$ is $(n-1)$-connected and the Gauss map $M \to
    BSO$ is $n$-connected, with a fibrewise embedding of a trivial
    subbundle $B \times M \subset E$ such that $(E, B \times M)$ is
    $(n-1)$-connected.
  \end{enumerate}
\end{theorem}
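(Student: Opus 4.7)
The plan is to interpret $\overline{\alpha}_\pi$ moduli-theoretically and use the authors' earlier work on the higher-dimensional Madsen--Weiss theorem (together with the accompanying homological stability) to produce a detecting bundle.

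I would begin by constructing a suitable ``core'' manifold $M$. Take any closed oriented representative $N$ of the bordism class $f$. Using $2n \neq 4$, perform surgeries below the middle dimension on $N$ to arrange that $N$ is $(n-1)$-connected and that its Gauss map $N \to BSO$ is $n$-connected; if the low-dimensional homotopy of $BSO$ is obstructing $n$-connectivity of the Gauss map, first connect-sum $N$ with a suitable null-bordant manifold carrying the needed tangential data before carrying out the surgeries. Set $M := N \setminus \mathrm{int}(D^{2n})$, so that $M$ inherits both connectivity properties of (iii). Let $K := (S^n \times S^n) \setminus \mathrm{int}(D^{2n})$ and, for each $g \geq 0$, set $M_g := M \natural K^{\natural g}$ and $W_g := M_g \cup_\partial D^{2n} \cong N \# (\#^g S^n \times S^n)$, which is closed, oriented, and has $[W_g] = f$ because $\#^g S^n \times S^n$ bounds a handlebody.

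Next I would study the classifying space $BG_g$ of the topological group $G_g := \Diff_\partial(K^{\natural g})$ of diffeomorphisms of the stabilisation piece fixing a collar of its boundary. Its universal $K^{\natural g}$-bundle, glued fibrewise with trivial bundles $BG_g \times M$ and $BG_g \times D^{2n}$, is a $W_g$-bundle tautologically equipped with the trivial sub-bundle $BG_g \times M$; the fibrewise pair $(W_g, M)$ is $(n-1)$-connected because $W_g$ is built from $M$ by attaching cells of dimension $\geq n$ only. The parametrised Pontryagin--Thom construction gives a map $\overline{\alpha} : BG_g \to \Omega^\infty_0 \MT{SO}{2n}$ (after translating in the $H$-space structure into the zero component, which does not affect pullback of $c$). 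The authors' main earlier theorem asserts that
\begin{equation*}
\hocolim_{g \to \infty} BG_g \lra \Omega^\infty_0 \MT{SO}{2n}
\end{equation*}
is an integral homology equivalence, and the accompanying homological stability theorem says that $H_*(BG_g; k) \to H_*(BG_{g+1}; k)$ is an isomorphism in a range growing linearly with $g$; the hypotheses of both (namely the connectivity of $(M, \partial M)$ and $n$-connectivity of its Gauss map) are exactly what was arranged in the construction of $M$. Consequently, for $g$ large enough (depending on the cohomological degree of $c$), the pullback $c_g \in H^*(BG_g; k)$ is non-zero, and hence pairs non-trivially against some singular cycle supported on a finite subcomplex. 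Realising this subcomplex as the image of a map from a finite CW complex $B$, and pulling back the universal $W_g$-bundle along $B \to BG_g$, produces the desired $\pi : E \to B$: (i) holds by the detection argument, (ii) holds by the bordism calculation, and (iii) holds because the trivial sub-bundle $B \times M$ and the $(n-1)$-connectedness of $(W_g, M)$ both descend to the pair $(E, B \times M)$.

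The hard part, conceptually, is to verify that the authors' earlier theorem applies in precisely the form needed to see all three conditions at once: what makes this possible is the happy coincidence that the stabilisation by $K = (S^n \times S^n) \setminus \mathrm{int}(D^{2n})$ that features in that theorem is both null-bordant (preserving (ii)) and $(n-1)$-connected (preserving the tangential hypotheses, and hence (iii)). A secondary technical point is the initial construction of $N$, where one needs to hit all non-trivial low-dimensional homotopy of $BSO$ with its Gauss map before performing the connectivity-improving surgeries; this is a Whitney-trick-type argument relying on $2n \neq 4$. Finally, one must check that the composite $B \to BG_g \to B\Diff(W_g) \to \Omega^\infty \MT{SO}{2n}$ agrees (after $H$-space translation) with $\overline{\alpha}_\pi$ for the constructed bundle $\pi$, but this is a routine naturality property of the parametrised Pontryagin--Thom construction.
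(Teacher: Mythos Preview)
Your overall strategy is the same as the paper's, but there is a genuine gap in your construction of $M$.  You set $M = N \setminus \mathrm{int}(D^{2n})$, so $\partial M = S^{2n-1}$; since $S^{2n-1}$ is $(2n-2)$-connected, the condition that $(M,\partial M)$ be $(n-1)$-connected forces $M$ itself, hence $N$, to be $(n-1)$-connected.  But an $(n-1)$-connected $N$ has $\pi_i(N)=0$ for $0<i<n$, so its Gauss map cannot be an isomorphism onto $\pi_i(BSO)$ in that range; already $\pi_2(BSO)=\bZ/2$ makes your two requirements on $N$ mutually incompatible for every $n\geq 3$.  The preliminary connect-summing you propose does not help: the subsequent surgeries that kill $\pi_{<n}(N)$ destroy exactly the classes you introduced to hit $\pi_{<n}(BSO)$.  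Thus your bundle satisfies (i) and (ii), but in general not (iii).

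The paper avoids this by \emph{not} insisting that $\partial M$ be a sphere.  It only arranges (Lemma~\ref{lem:Const}) that a representative $F$ of $f$ has $n$-connected Gauss map, with no connectivity hypothesis on $F$ itself, and then cuts $F$ at the level set $P = h^{-1}(n-\tfrac12)$ of a self-indexing Morse function, setting $M = h^{-1}([n-\tfrac12,2n])$ and $W = h^{-1}([0,n-\tfrac12])$.  Now $(M,P)$ is $(n-1)$-connected because $M$ is built from $P$ by handles of index $\geq n$, while the inclusion $M\hookrightarrow F$ is $n$-connected, so the Gauss map $M\to BSO$ inherits $n$-connectivity from that of $F$.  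The stabilising cobordism $K$ is correspondingly a self-cobordism of $P$ (the doubled index-$n$ part of $F$, connect-summed with $S^n\times S^n$), not your $(S^n\times S^n)\setminus D^{2n}$; one then applies the form of \cite[Theorem~1.8]{GR-W2} discussed in \cite[Remark~1.11]{GR-W2} to the pair $(W,K)$.  The remainder of your outline---detecting $c$ at a finite stage of the colimit and then on a finite subcomplex $B$---is essentially what the paper does, although it dualises to homology with coefficients in $\Hom(k',\bQ/\bZ)$ for a finitely generated $k'\subset k$ rather than invoking a separate homological-stability statement.
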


For $k = \bQ$, Theorem~\ref{thm:main} reproduces Ebert's result, but
with an entirely different proof.  Ebert's argument used the known
structure of $H^*(\Omega^\infty_0 \MT{SO}{2n};\bQ)$ as a Hopf algebra
to reduce his detection result to certain explicit characteristic
classes $\kappa_c \in H^p(\Omega^\infty_0 \MT{SO}{2n};\bQ)$, the
``generalised Miller--Morita--Mumford classes'', associated to a
monomial $c \in H^{p + 2n}(BSO(2n))$ in the Euler class and Pontryagin
classes (we will describe these $\kappa_c$ below).  He then uses an
explicit construction of smooth bundles to detect these classes.  We
will deduce Theorem~\ref{thm:main} as an application of the
higher-dimensional version of the Madsen--Weiss theorem from our
previous paper \cite{GR-W2}.

By Theorem~\ref{thm:main}, characteristic classes can be detected
using bundles whose fibre lies in a prescribed bordism class.  We also
investigate the question of detecting classes when the bordism class
of the total space and base space is prescribed.  Before describing
our results in this direction, we recall some definitions.

\subsection{The parametrised Pontryagin--Thom construction}
\label{sec:param-pontry-thom}

We recall the definition of the space $\Omega^\infty_0 \MT{SO}{d}$,
and the map $\overline{\alpha}_\pi: B \to \Omega^\infty_0 \MT{SO}{d}$
associated to a smooth bundle of closed oriented $d$-manifolds $\pi: E
\to B$.

Let $\gamma_N^\perp$ denote the $(N-d)$-dimensional bundle over the
oriented Grassmannian $\Gr_d^+(\R^N)$ and let
$\mathrm{Th}(\gamma_N^\perp)$ denote its Thom space.  There is a
natural map $S^1 \wedge \Th(\gamma_N^\perp) \to
\Th(\gamma_{N+1}^\perp)$, and we define the infinite loop space
\begin{align*}
  \Omega^\infty \MT{SO}{d} = \colim_{N \to \infty} \Omega^N
  \mathrm{Th}(\gamma_N^\perp).
\end{align*}
Given a smooth bundle of closed oriented $d$-manifolds $\pi: E \to B$,
we may pick an embedding $j: E \to B \times \R^N$ over $B$, and extend
to an open embedding of the fibrewise normal bundle $\nu E \to B
\times \R^N$.  Then the Pontryagin--Thom collapse construction gives a
based map $B_+ \wedge S^N \to \Th(\nu)$.  The embedding $j$ also
induces a map $\tau: E \to \Gr_d^+(\R^N)$ with $\tau^*(\gamma^\perp_N) =
\nu$, and we have a composition $B_+ \wedge S^N \to \Th(\nu) \to
\Th(\gamma^\perp_N)$, whose adjoint gives a map
\begin{align*}
  \alpha_\pi: B \to \Omega^N \Th(\gamma_N^\perp) \subset \Omega^\infty
  \MT{SO}{d}.
\end{align*}
For large $N$, the embedding $j: E \to B \times \R^N$ is unique up to
isotopy, so the map $\alpha_\pi: B \to \Omega^\infty \MT{SO}{d}$ has
well defined homotopy class, depending only on the bundle $\pi: E \to
B$.  All path components of $\Omega^\infty \MT{SO}{d}$ are homotopy
equivalent, and we let $\overline{\alpha}_\pi: B \to
\Omega^\infty_0 \MT{SO}{d}$ denote the translation to the
basepoint component.

Let us now recall the definition of the \emph{generalised
  Miller--Morita--Mumford classes}.  These are universal classes
\begin{equation*}
  \kappa_c \in H^p(\Omega^\infty \MT{SO}{d})
\end{equation*}
associated to a class $c \in H^{p+d}(BSO(d))$ by applying the composition
\begin{equation*}
  H^{p+d}(BSO(d)) \overset{\text{Thom}}\lra H^p(\MT{SO}{d}) \overset{\sigma}\lra H^p(\Omega^\infty \MT{SO}{d}),
\end{equation*}
where the first map is the Thom isomorphism, and the second map is the
cohomology suspension. Given a bundle $\pi: E \to B$ with
corresponding Pontryagin--Thom map $\overline{\alpha}_\pi$, we obtain
the equation
$$\overline{\alpha}_\pi^*(\kappa_c) = \pi_!(c(T_\pi E)) \in H^{p}(B),$$
where $c(T_\pi E) \in H^{p+d}(E)$ denotes the characteristic class of
oriented vector bundles $c$ applied to te bundle $T_\pi E$, and
$\pi_!: H^{p+d}(E) \to H^p(B)$ denotes the Gysin map (fibrewise
integration).

Finally let us recall the rational cohomology of
$\Omega^\infty_0\MT{SO}{d}$, in the case where $d$ is even, say $d =
2n$.  To this end, let $\mathcal{B} \subset H^*(BSO(2n);\bQ)$ be the
set of monomials in the Euler class and the Pontryagin classes whose total
degree is greater than $2n$.  Then the natural map induces an
isomorphism
\begin{equation*}
  \bQ[\kappa_c \,\vert\, c \in \mathcal{B}] \overset{\cong}{\lra}
  H^*(\Omega^\infty_0 \MT{SO}{2n};\bQ).
\end{equation*}

\subsection{Bundles with prescribed characteristic numbers}
\label{sec:bundl-with-prescr}

If $\pi:E \to B$ is a smooth bundle of closed oriented
$2n$-dimensional manifolds and $B$ is also closed, of dimension $p$
say, then for each $\kappa \in H^p(\Omega^\infty_0\MT{SO}{2n})$ we get
a number
\begin{equation*}
  K(\kappa) = \int_B \overline{\alpha}_\pi^*(\kappa) \in \bZ
\end{equation*}
which we call the \emph{characteristic number} of the bundle
associated to the class $\kappa$.  The process $\kappa \mapsto K(\kappa)$ is
additive and gives a homomorphism
\begin{equation*}
  K: H^p(\Omega^\infty_0\MT{SO}{2n}) \lra \bZ.
\end{equation*}
For $d = 2n$ and each sequence $C = (c_1, \dots, c_r)$ of elements in
$\mathcal{B}$ we get a class $\kappa_C = \kappa_{c_1} \dots
\kappa_{c_r} \in H^*(\Omega^\infty_0 \MT{SO}{2n})$, and the
homomorphism $K$ is uniquely determined by the characteristic numbers
\begin{equation*}
  K_C = K(\kappa_C),
\end{equation*}
as $C = (c_1, \dots, c_r)$ runs through sequences of elements of $B$
with $p = \sum(|c_i| - 2n)$.

Theorem~\ref{thm:main} can be combined with Thom's theorem that any
non-zero cohomology class in $H^p(B;\bQ)$ is detected on some smooth
$p$-manifold $M \to B$ to deduce that for any non-trivial $C = (c_1,
\dots, c_r)$ with $p = \sum(|c_i| - 2n)$ there exists a smooth bundle
$\pi: E \to B$ where $B$ is a closed $p$-manifold, such that $K_C \neq
0$ , and we may even prescribe the bordism class of its fibre.  It is
a natural question to ask to what extent it is possible to find
bundles $\pi: E \to B$ with prescribed values of all characteristic
numbers, perhaps also prescribing the bordism classes $[F]$, $[E]$ and
$[B]$ of its fibres, total space, and base space.  In this form the
question is extremely difficult, and involves the integral homology
$H^*(\Omega^\infty_0 \MT{SO}{d})$ which is unknown for $d > 2$ (and
very complicated for $d=2$, cf.\ \cite{galatius-2004}).  However, if
we only ask for realising values of $[E]$, $[B]$ and the numbers $K_C$
up to multiplication by the same positive integer, we can give a
complete answer.  In other words, for a bundle $\pi: E \to B$ with
$2n$-dimensional fibres which are all in the same bordism class $[F]$
and $p$-dimensional base, the bordism classes $[E]$ and $[B]$ and the
characteristic numbers determine a point in the projectivisation of
the rational vector space
\begin{equation*}
  \big(\Omega^{SO}_{2n+p} \oplus \Omega^{SO}_p \oplus
  \Hom(H^p(\Omega^\infty_0\MT{SO}{2n}),\bZ) \big) \otimes \bQ.
\end{equation*}
We prove that the set of points which can be realised by bundles is a
rational subvariety cut out by the linear equations~\eqref{eq:3}
described below.  (The subvariety depends on the bordism class $[F]$
of the fibres of $\pi$.)

Our interest in the question of (in)dependence of characteristic
number of bundles from bordism classes of fibre, total space and base
space came from work of Church, Farb and Thibault: they proved in
\cite{CFT} that certain characteristic numbers of surface bundles
depend only on the characteristic numbers of its total space.  That
work was generalised to higher dimensions by Church, Crossley and
Giansiracusa: in \cite{CCG} they completely classified characteristic
numbers of bundles of oriented $d$-manifolds which depend only on the
oriented bordism class of its total space (i.e.\ not on the base $B$ or
the map $\pi$).  For $d = 2n$, their equations expressing a
characteristic number of a bundle as a characteristic number of its
total space will be contained in our equations.  We shall answer the
related question of characteristic numbers of bundles which depend
only on the bordism classes of the total space $E$ \emph{and} the base
$B$ (but still not the map $\pi: E \to B$; in fact, the
Church--Crossley--Giansiracusa equations are precisely those of our
equations which do not involve the base and the fibre).  We also
address the corresponding realisation question: Any set of solutions
to our equations is, up to scaling by positive integers, realised by a
bundle.

Suggested by the notion of ``near-primitive elements'' of \cite{CCG}
we make the following definition, which we will use to formulate our
equations.
\begin{definition}
  Let $d \geq 2$ and $\rho : H^*(BSO;\bQ) \to H^*(BSO(d);\bQ)$ be the
  restriction map. We say a class $x \in H^{p+d}(BSO;\bQ)$ is
  \emph{almost primitive of order $d$} if the coproduct
  $\Delta(x)$ can be written
  \begin{equation}\label{eq:2}
    \Delta(x) = 1 \otimes x + \sum x^p_j \otimes x^{d}_j + \sum a_i
    \otimes b_i
  \end{equation}
  where $x^r_j$ has degree $r$ and each $b_i$ either has degree $< d$
  or lies in $\Ker(\rho)$.  In other words, $x$ is almost primitive of
  order $d$ when it is sent to $1 \otimes \rho(x)$ under the
  composition
  \begin{equation*}
    H^*(BSO) \overset{\Delta}\lra H^*(BSO) \otimes H^*(BSO)
    \xrightarrow{\mathrm{Id} \otimes \text{proj}\circ \rho} H^*(BSO)
    \otimes H^{* \geq d+1}(BSO(d)).
  \end{equation*}
  We write $AP^*(d) \subset
  H^*(BSO;\bQ)$ for the vector subspace of such elements.
\end{definition}

Explicitly, the space of almost primitive elements is described by the
following proposition.  Let $\ph_i \in H^{4i}(BSO;\bQ)$ denote the
$i$th Pontryagin character class (which is primitive for the coproduct
$\Delta$), so $H^*(BSO;\bQ) \cong \bQ[\ph_1, \ph_2, \ldots]$.

\begin{proposition}\label{prop:Class}
  For $d \geq 2$, the vector subspace $AP^*(d) \subset \bQ[\ph_1,
  \ph_2, \ldots]$ is spanned by those monomials in the $\ph_i$'s
  having the property that every proper factor has degree $\leq d$.
  (In particular, the $\ph_i$
  themselves are almost primitive of any degree.)
\end{proposition}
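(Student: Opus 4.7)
The proof has two inclusions. For the first, let $\ph^\alpha$ be a monomial all of whose proper factors have degree $\le d$. Since each $\ph_i$ is primitive, the coproduct satisfies $\Delta(\ph^\alpha) = \sum_{\beta \le \alpha} \binom{\alpha}{\beta} \ph^\beta \otimes \ph^{\alpha-\beta}$ with $\binom{\alpha}{\beta} := \prod_i \binom{\alpha_i}{\beta_i}$. Removing the summand $1 \otimes \ph^\alpha$, each remaining right factor $\ph^{\alpha-\beta}$ is a proper factor of $\ph^\alpha$, hence of degree $\le d$, and so is killed by $\operatorname{proj}_{>d}\circ\,\rho$. This verifies $\ph^\alpha \in AP^*(d)$.

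For the reverse inclusion, decompose $x \in AP^*(d)$ as $x = x_g + x_b$ into good and bad parts. By the previous paragraph $x_g \in AP^*(d)$, whence $x_b \in AP^*(d)$, and it suffices to show $x_b = 0$. Writing $x_b = \sum_{\alpha} c_\alpha \ph^\alpha$ (with all $\alpha$ bad and $|\alpha|=n$) and separating the defining condition by the left tensor factor, $x_b \in AP^*(d)$ is equivalent to the system
\[
(\star_\beta)\qquad \sum_\alpha c_\alpha \binom{\alpha}{\beta}\,\rho(\ph^{\alpha-\beta}) = 0 \quad\text{in } H^{n-|\beta|}(BSO(d);\bQ)
\]
for every multi-index $\beta > 0$ with $|\beta| < n - d$.

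The central input is that $\rho(\ph^\gamma) \ne 0$ for every $\gamma \ne 0$: $H^*(BSO(d);\bQ)$ is a polynomial ring (hence a domain), and Newton's identities give each $\rho(\ph_i)$ a nonzero $p_1^i$ summand. I then plan to order the bad monomials lexicographically, first by $i^*(\alpha):=\min\{i:\alpha_i>0\}$ and then by $\alpha-e_{i^*(\alpha)}$, pick an extremal $\alpha^*\in\operatorname{supp}(x_b)$ with $i^*(\alpha^*)$ minimal across the support, and apply $(\star_\beta)$ with $\beta=e_{i^*(\alpha^*)}$. By minimality only $\alpha$ with $i^*(\alpha)=i^*(\alpha^*)$ contribute, and inspecting coefficients of appropriate basis elements in $H^*(BSO(d);\bQ)$ should then force $c_{\alpha^*}=0$; iterating eliminates all bad monomials. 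The main obstacle I anticipate is that the images $\rho(\ph^\gamma)$ are not linearly independent once $\gamma$ involves $\ph_i$ with $i>\lfloor d/2\rfloor$ (the image of $\rho$ is a polynomial ring in only $\lfloor d/2\rfloor$ generators), so the step ``inspecting coefficients forces $c_{\alpha^*}=0$'' must be executed with care, by combining $(\star_\beta)$ with further equations $(\star_{\beta'})$ to build a triangular system in the $c_\alpha$'s compatible with the chosen ordering.
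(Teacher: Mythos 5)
Your easy direction, the reduction to the purely ``bad'' part $x_b$, and the reformulation as the system $(\star_\beta)$ are all correct, as is the observation that $\rho(\ph^\gamma)\neq 0$ for every $\gamma\neq 0$. But from that point on what you offer is a plan, not a proof, and you have correctly identified the reason the plan is in trouble: the equations $(\star_\beta)$ live in the quotient ring $H^*(BSO(d);\bQ)$ where the images $\rho(\ph^{\alpha-\beta})$ satisfy many linear relations, so there is no evident basis in which ``reading off the coefficient of the extremal monomial'' is legitimate, and no triangular system drops out of the lexicographic ordering you propose. The fact $\rho(\ph^\gamma)\neq 0$ alone cannot close this; one can have nontrivial relations $\sum c_\alpha\binom{\alpha}{\beta}\rho(\ph^{\alpha-\beta})=0$ even though every individual $\rho(\ph^{\alpha-\beta})$ is nonzero. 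So the hard inclusion is genuinely incomplete.

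The paper takes a different route which sidesteps the linear algebra in $H^*(BSO(d);\bQ)$ entirely. The key additional input is not just that $\rho$ is nonzero on monomials, but the stronger structural fact that $\Ker(\rho)\subset H^*(BSO;\bQ)$ vanishes in degrees $\leq 4\lfloor d/2\rfloor$ (the ideal is generated by $p_{\lfloor d/2\rfloor+1},p_{\lfloor d/2\rfloor+2},\dots$). Separating $\Delta(x)$ by a fixed left factor $\ph^I$, the almost-primitivity condition forces the corresponding right-factor sum to be either of degree $\leq d$ or a nonzero element of $\Ker(\rho)$, hence of degree $>4\lfloor d/2\rfloor$. This yields a ``degree gap'': every proper factor of a monomial of $x$ has degree $\leq d$ or degree $>2d$. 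Taking a bad proper factor $\ph^J$ of minimal degree $>d$, minimality forces all of its own proper factors to have degree $\leq d$, and since $|\ph^J|>2d$ a pigeonhole argument produces a proper factor of degree in $(d,2d]$ if $\ph^J$ has more than one prime factor — contradicting the degree gap. So $\ph^J=\ph_j$ for a single $j$, and a second application of the almost-primitivity condition (this time factoring out $\ph_j$ on the right) shows $\ph_j\in\Ker(\rho)$, which is false. This is the ingredient your plan is missing: the degree-vanishing of $\Ker(\rho)$ and the resulting combinatorial constraint, rather than an attempt to invert a linear system over a non-injective $\rho$.
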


Using the notion of almost-primitive elements, we now describe the
linear equations relating $[E]$, $[B]$ and the characteristic numbers
of a bundle $\pi: E \to B$.  Suppose $B$ has dimension $p$ and $E$ has
dimension $p + 2n$, let $x \in AP^{p+2n}(2n) \subset
H^{p+2n}(BSO;\bQ)$, and $\rho(x) \in H^*(BSO(2n);\bQ)$ denote its
restriction.  Using~\eqref{eq:2}, the bundle isomorphism $TE \cong
\pi^*(TB) \oplus T_\pi E$ gives
\begin{equation*}
  x(TE) = \rho(x)(T_\pi E) + \sum_j \pi^*(x^p_j(TB)) \cdot
  x^{2n}_j(T_\pi E) + \sum_i \pi^*(a_i(TB)) \cdot b_i(T_\pi E)
\end{equation*}
where $b_i(T_\pi E)$ is a characteristic class of $T_\pi E$ of degree
$< 2n$.  Applying $\pi_!$ we obtain the equation
\begin{equation*}
  \pi_!(x(TE)) = \kappa_{\rho(x)}(\pi) + \sum_j x^p_j(TB) \cdot \int_F
  x^{2n}_j(TF) \in H^{p}(B;\bQ),
\end{equation*}
and so by integrating over $B$,
\begin{equation*}
  \int_E x(TE) = \int_B \kappa_{\rho(x)}(\pi) + \sum_j \left(\int_B
    x_j^p(TB)\right)\cdot\left( \int_F x_j^{2n}(TF)\right). 
\end{equation*}
The first term on the right hand side is a characteristic number of
the bundle $\pi$.  If we write characteristic numbers of closed
manifolds as $\langle x, [E]\rangle = \int_E x(TE)$ and similarly for
$B$ and $F$, the equation can be written as
\begin{equation}
  \label{eq:3}
  \langle x, [E] \rangle = K(\kappa_{\rho(x)}) + \sum_j \langle x_j^p, [B]\rangle
  \cdot \langle x_j^{2n}, [F] \rangle.
\end{equation}
For a fixed fibre $F$, we view~\eqref{eq:3} as a set of linear
equations between the bordism classes $[E]$ and $[B]$ and the
characteristic numbers $K(\kappa_{\rho(x)})$, one equation for each
element $x$ in a basis for the vector space $AP^{p+2n}(2n)$.  Our
second result is that any formal solution to these relations may be
realised by a fibre bundle, up to multiplying by a positive integer.
\begin{theorem}\label{thm:main2}
  Let $2n \in \{2, 6, 8, 10, \ldots\}$ and $p > 0$. Fix
  \begin{enumerate}[(i)]
  \item a class $f \in \Omega^{SO}_{2n}$,
  \item a class $e \in \Omega^{SO}_{2n+p}$,
  \item a class $b \in \Omega_p^{SO}$,
  \item\label{it:4} a homomorphism $K: H^{p}(\Omega^\infty_0
    \MT{SO}{2n}) \to \bZ$.
  \end{enumerate}
  Suppose that for each $x \in AP^{2n+p}(2n)$, with coproduct as
  in~\eqref{eq:2}, we have
  \begin{equation}
    \label{eq:4}
    \langle x, e \rangle = K(\kappa_{\rho(x)}) + \sum_j \langle x_j^p, b\rangle
    \cdot \langle x_j^{2n}, f \rangle.
  \end{equation}
  Then there exists an integer $N > 0$ and a fibre bundle $\pi: E \to
  B$ satisfying condition (\ref{item:1}) in Theorem~\ref{thm:main},
  with $[E] = Ne$, $[B]=Nb$, and $\int_B \kappa_C(\pi) = NK(\kappa_C)$
  for all sequences $C = (c_1, \dots, c_r)$ of elements of
  $\mathcal{B}$ with $p = \sum (|c_i| - 2n)$, and with fibres in the
  bordism class $f$.
\end{theorem}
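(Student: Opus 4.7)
The plan is to deduce Theorem~\ref{thm:main2} from Theorem~\ref{thm:main} and Thom's theorem: I would first realise the data $(b,K)$ by a bundle $\pi_1$ using a combined Thom/Madsen--Weiss argument, then correct the resulting total-space bordism class to match $e$ by disjoint union with a suitable correction bundle $\pi_2$.

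\emph{Phase 1.} View $(b,K)$ as an element of $\bigl(\Omega_p^{SO}\oplus\Hom(H^p(\Omega^\infty_0\MT{SO}{2n}),\bZ)\bigr)\otimes\bQ$. By Thom's theorem the natural map
\[
\Omega_p^{SO}(\Omega^\infty_0\MT{SO}{2n})\otimes\bQ\lra\bigl(\Omega_p^{SO}\oplus\Hom(H^p(\Omega^\infty_0\MT{SO}{2n}),\bZ)\bigr)\otimes\bQ
\]
is surjective, so for some integer $N_1$ there is a closed oriented $p$-manifold $B_1$ with $[B_1]=N_1 b$ equipped with a map $\phi_1: B_1\to\Omega^\infty_0\MT{SO}{2n}$ satisfying $\int_{B_1}\phi_1^*\kappa_C=N_1 K(\kappa_C)$ for every $C$. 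By the higher-dimensional Madsen--Weiss theorem of \cite{GR-W2}, the key input to Theorem~\ref{thm:main}, after further multiplication by an integer $N_2$ we may lift $\phi_1$ to the parametrised Pontryagin--Thom map of a smooth bundle $\pi_1: E_1\to B_1'$ with $B_1'$ bordant to $N_2 B_1$, fibres in the bordism class $f$, and satisfying condition (\ref{item:1}) of Theorem~\ref{thm:main}. Set $N:=N_1 N_2$. The derivation of~\eqref{eq:3} applied to $\pi_1$, together with hypothesis~\eqref{eq:4}, yields $\langle x,[E_1]\rangle = N\langle x,e\rangle$ for every $x\in AP^{p+2n}(2n)$, so the discrepancy $\delta := Ne-[E_1]$ lies in $(AP^{p+2n}(2n))^\perp\subset \Omega_{p+2n}^{SO}\otimes\bQ$.

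\emph{Phase 2 and main obstacle.} It now suffices, for some positive integer $M$, to build a correction bundle $\pi_2: E_2\to B_2$ with fibres in class $f$, $[B_2]=0$, $K_2=0$, $[E_2]=M\delta$, and satisfying condition (\ref{item:1}); the disjoint union $\pi_2\sqcup(M\cdot\pi_1)$ then realises $(MNe,MNb,MNK)$, proving the theorem with $N$ replaced by $MN$. The hard part is producing the correction bundles. By Proposition~\ref{prop:Class}, $(AP^{p+2n}(2n))^\perp$ is spanned by duals of monomials $\ph_I=\ph_{i_1}\cdots\ph_{i_r}$ having some proper factor of degree $>2n$; for each such $\ph_I$, split it as $\ph_J\cdot\ph_L$ with $|\ph_J|>2n$ and $|\ph_L|>0$, choose $B_2$ null-bordant with $\ph_L(TB_2)$ non-trivial, and use the Madsen--Weiss surjectivity of \cite{GR-W2} to fit a bundle over $B_2$ whose $\kappa_{\rho(\ph_J)}$-class is Poincar\'e dual to $\ph_L(TB_2)$. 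Expansion of $\int_{E_2}\ph_I(TE_2)$ via the coproduct of $\ph_I$ then contains a surviving nonzero contribution from this $(J,L)$-splitting; the remaining characteristic numbers, and all $K$-numbers, are simultaneously killed by further bundles built by the same method, in a finite inductive scheme ordered by degree. Arranging all these cancellations at once---for which the precise structural description of $AP^*(2n)$ in Proposition~\ref{prop:Class} is essential---is the technical heart of the argument. The restriction $2n\neq 4$ is inherited throughout from Theorem~\ref{thm:main}.
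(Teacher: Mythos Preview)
Your overall strategy---realise $(b,K)$ first, then correct the total-space class---is reasonable and closely parallels the paper's reduction (Lemma~\ref{lem:reduction}) to the case $b=f=0$.  Phase~1 is fine.  The genuine gap is in Phase~2.

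What you need in Phase~2 is precisely that every $\delta\in (AP^{p+2n}(2n))^\perp\subset\Omega^{SO}_{p+2n}\otimes\bQ$ arises as $[E_2]$ for a (formal) bundle with $[B_2]=0$ and all $K$-numbers zero.  The paper proves exactly this, but not by explicit construction: it builds a spectrum-level map $\mathcal{E}:\MSO\wedge\mathbf{mtso}(2n)\to\Sigma^{-2n}\MSO$ and shows that the sequence $\overline{\Omega}^{SO}_p(\mathbf{mtso}(2n))\otimes\bQ\to\Omega^{SO}_{p+2n}\otimes\bQ\to AP^{p+2n}(2n)^\vee\to 0$ is exact, by observing that on cohomology (after Thom isomorphism) $\mathcal{E}^*$ becomes the composite $H^*(BSO)\overset{\Delta}\to H^*(BSO)\otimes H^*(BSO)\to H^{*\geq 1}(BSO)\otimes H^{*\geq 2n+1}(BSO(2n))$, whose kernel is $AP^*(2n)$ \emph{by definition}.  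A short diagram chase then finishes the job, and GR--W2 is invoked only once at the end to turn the resulting bordism class into a genuine bundle.

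Your proposed substitute---build explicit bundles $\pi_{J,L}$ indexed by splittings $\ph_I=\ph_J\cdot\ph_L$ and cancel unwanted $K$-numbers and cross-terms ``in a finite inductive scheme ordered by degree''---is not a proof as it stands.  Each correction bundle you add to kill a $K$-number will alter $[E_2]$, and each adjustment of $[E_2]$ may reintroduce $K$-numbers; you assert that this terminates but do not say what the induction variable is or why it decreases.  (Also, a smaller imprecision: GR--W2 lets you realise a bordism class $(B_2,\phi_2)\in\Omega^{SO}_p(\Omega^\infty_0\MT{SO}{2n})$ by a bundle over some $B_2'$ bordant to a multiple of $B_2$, not over $B_2$ itself; this is harmless for the mixed characteristic numbers $\int_{B_2}\ph_L(TB_2)\cup\kappa_{\rho(\ph_J)}$, which are bordism invariants, but your phrasing ``fit a bundle over $B_2$'' obscures this.)  Making your inductive scheme precise would amount to reproving, by hand, the surjectivity that the paper obtains in one line from the definition of $AP^*(2n)$; it is not clear this can be done without that definition doing the same work.
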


If we do not care about the characteristic numbers $K(\kappa_C)$, we
can pick them so $K(\kappa_{\rho(x)})$ satisfies the equations.  We
get the following corollary.
\begin{corollary}\label{cor:fibrre}
  Let $2n \in \{2, 6, 8, 10, \ldots\}$ and $p > 0$. Fix classes $f, e,
  b \in \Omega_*^{SO}$ with degrees $2n$, $2n+p$, and $p$
  respectively.  Then there exists an integer $N > 0$ and a fibre
  bundle $\pi: E \to B$, satisfying condition~(\ref{item:1}) of
  Theorem~\ref{thm:main}, such that $[E] = Ne$ and $[B]=Nb$, and such
  that the fibres of $\pi$ are in the bordism class $f$.\qed
\end{corollary}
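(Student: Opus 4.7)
The plan is to apply Theorem~\ref{thm:main2} after constructing a suitable homomorphism $K$. Since the statement of the corollary imposes no condition on the characteristic numbers $K(\kappa_C)$, we have total freedom in choosing $K$: the only requirement is that equation~\eqref{eq:4} is satisfied for every $x\in AP^{2n+p}(2n)$.

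Define a $\bQ$-linear functional
\[
\Phi(x) := \langle x,e\rangle - \sum_j\langle x_j^p,b\rangle\langle x_j^{2n},f\rangle
\]
on $AP^{2n+p}(2n)$; it is independent of the coproduct decomposition, because any $x_j^{2n}$ that happens to lie in $\ker\rho$ pairs trivially with the $2n$-dimensional class $f$. The required relation $K(\kappa_{\rho(x)})=\Phi(x)$ determines $K$ on the subspace spanned by $\{\kappa_{\rho(x)}:x\in AP^{2n+p}(2n)\}$ of $H^p(\Omega^\infty_0\MT{SO}{2n};\bQ)$, provided $x\mapsto\kappa_{\rho(x)}$ is injective on $AP^{2n+p}(2n)$. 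Now $c\mapsto\kappa_c$ is itself injective on $H^{2n+p}(BSO(2n);\bQ)$ (for $p>0$ every monomial of degree $2n+p$ lies in $\mathcal{B}$ and hence gives a polynomial generator of $H^*(\Omega^\infty_0\MT{SO}{2n};\bQ)$), so it suffices to prove that $\rho$ is injective on $AP^{2n+p}(2n)$. By Proposition~\ref{prop:Class}, $AP^{2n+p}(2n)$ is spanned by monomials $\ph_{i_1}\cdots\ph_{i_k}$ of total degree $2n+p$ with every proper factor of degree $\leq 2n$; this forces $i_j\geq\lceil p/4\rceil$ for every $j$. If $p\leq 2n$, then moreover each $i_j\leq(2n+p)/4\leq n$, so $\rho(\ph_{i_1}\cdots\ph_{i_k})$ equals, up to a nonzero scalar, the product of power sums $\sum_{\ell=1}^n\alpha_\ell^{2i_j}$ in the formal Pontryagin roots $\alpha_1^2,\dots,\alpha_n^2$; distinct multisets $\{i_1,\dots,i_k\}$ with all $i_j\leq n$ give distinct basis elements of $\bQ[\alpha_1^2,\dots,\alpha_n^2]^{S_n}$, so these are linearly independent. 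If $p>2n$ and $2n+p\in 4\bZ$, the inequality $2\lceil p/4\rceil>(2n+p)/4$ forces $k=1$, so $AP^{2n+p}(2n)=\bQ\cdot\ph_{(2n+p)/4}$ is one-dimensional and $\rho$ of its generator is a nonzero power sum; and if $2n+p\notin 4\bZ$ then $H^{2n+p}(BSO;\bQ)=0$ and there is nothing to check.

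With this injectivity established, define $K_\bQ(\kappa_{\rho(x)}):=\Phi(x)$ and extend (e.g.\ by zero on a chosen complement) to a $\bQ$-linear functional on $H^p(\Omega^\infty_0\MT{SO}{2n};\bQ)$. Choose $N_0>0$ clearing denominators, so that $K:=N_0K_\bQ$ restricts to a homomorphism $H^p(\Omega^\infty_0\MT{SO}{2n};\bZ)\to\bZ$. Equation~\eqref{eq:4} then holds on the nose for the 4-tuple $(f,N_0 e,N_0 b,K)$, so Theorem~\ref{thm:main2} furnishes an integer $N_1>0$ and a fibre bundle $\pi:E\to B$ with fibres in class $f$, satisfying condition~\eqref{item:1} of Theorem~\ref{thm:main}, and with $[E]=N_1N_0\,e$, $[B]=N_1N_0\,b$. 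Setting $N:=N_1N_0$ completes the proof. The main obstacle is the injectivity claim for $\rho|_{AP^{2n+p}(2n)}$; everything else is formal.
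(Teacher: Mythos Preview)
Your proof is correct and follows the same strategy the paper has in mind: pick $K$ so that equation~\eqref{eq:4} holds, then invoke Theorem~\ref{thm:main2}. The paper's argument is the one-line remark preceding the corollary; you have filled in the details.

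The only substantive point is the injectivity of $\rho$ on $AP^{2n+p}(2n)$, which you prove directly from Proposition~\ref{prop:Class} by a case analysis on $p$ versus $2n$. The paper establishes the same fact as an unnumbered lemma in the proof of Proposition~\ref{prop:CobMain2}, with a somewhat shorter argument: any $x \in AP^{*}(d) \cap \Ker(\rho)$ lies in the ideal $(p_{\lfloor d/2\rfloor+1}, p_{\lfloor d/2\rfloor+2}, \ldots)$, hence has degree $> 2d$; but Proposition~\ref{prop:Class} then forces $x$ to be primitive, i.e.\ a multiple of some $\ph_j$, and no $\ph_j$ lies in $\Ker(\rho)$ for $d \geq 2$. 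Either argument is fine; the paper's avoids the case split.
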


\begin{remark}
  Condition~(\ref{item:1}) in the conclusion of Theorem~\ref{thm:main}
  implies that the fibres of $\pi$ satisfy that the Gauss map to $BSO$
  is $n$-connected.  By work of Kreck, such manifolds have very strong
  rigidity properties.  Indeed, by \cite[Theorem C]{Kreck}, two such
  are oriented diffeomorphic if and only if they are oriented bordant
  and have the same Euler characteristic, provided the Euler
  characteristic is sufficiently large.

  Therefore, the statements in Theorem~\ref{thm:main},
  Theorem~\ref{thm:main2} and Corollary~\ref{cor:fibrre} that the
  fibres of the bundle $\pi: E \to B$ are in the prescribed bordism
  class $f \in \Omega^{SO}_{2n}$ could as well have been stated as
  follows: given a closed $2n$-dimensional manifold $F$ whose Gauss
  map $F \to BSO$ is $n$-connected, there is a $g \gg 0$ such that all
  fibres of $\pi$ are diffeomorphic to $F \# g(S^n \times S^n)$.
\end{remark}

%%% Local Variables: 
%%% mode: latex
%%% TeX-master: "detection"
%%% End: 

\section{Proof of Theorem \ref{thm:main}}

Let the data $f \in \Omega_{2n}^{SO}$ and $0 \neq c \in
H^*(\Omega^\infty_0 \MT{SO}{2n};k)$ be given.

\begin{lemma}\label{lem:Const}
  There is a manifold $F$ in the bordism class $f$ having the property
  that the (normal) Gauss map $\nu_F:F \to BSO$ is $n$-connected.
\end{lemma}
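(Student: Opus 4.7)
The plan is to apply surgery below the middle dimension. I start with an arbitrary closed oriented $2n$-manifold $F_0$ representing the class $f$, and inductively modify it, preserving its oriented bordism class, until the normal Gauss map $\nu_F : F \to BSO$ is $n$-connected. The case $2n = 2$ is vacuous since $BSO$ is $2$-connected, so I assume $n \geq 2$.

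First I make $F$ connected by surgery on $0$-spheres. Then I induct on $k$: supposing $\nu_F$ is $k$-connected for some $0 \leq k \leq n - 1$, I improve to $(k+1)$-connected in two sub-steps. The first is to kill the kernel of $\pi_k(F) \to \pi_k(BSO)$. An element of this kernel is represented by a map $S^k \to F$ whose composition with $\nu_F$ is nullhomotopic, so the pullback of the stable normal bundle is trivial, and hence the rank-$(2n-k)$ normal bundle of an embedded representative is stably---and, since $2n - k > k$ places us in the stable range, actually---trivial. Because $2k < 2n$ a generic map $S^k \to F$ is already an embedding, so I can perform surgery along it. The attached handle $D^{k+1} \times S^{2n-k-1}$ contributes no new $\pi_k$-classes (since $2n - k - 1 > k$ when $k \leq n-1$), and the Gauss map extends over the trace of the surgery by pulling back the stable normal bundle, so the new manifold remains in the bordism class $f$. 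Once $F$ is simply connected (after the $k = 1$ case), Serre's theorem ensures $\pi_k(F)$ is finitely generated, so the kernel can be killed by finitely many surgeries.

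The second sub-step is to ensure surjectivity on $\pi_{k+1}(F) \to \pi_{k+1}(BSO)$. For each of the finitely many generators $y$ of $\pi_{k+1}(BSO)$, I construct a null-bordant, $k$-connected closed oriented $2n$-manifold $N_y$ containing an embedded $(k+1)$-sphere whose Gauss map represents $y$: concretely, one can take $N_y$ to be the double of the disk bundle of a vector bundle over $S^{k+1}$ whose stable class is $-y$. Replacing $F$ by $F \# N_y$ preserves the bordism class (since $N_y$ is null-bordant), preserves the $k$-connectivity of the Gauss map (by van Kampen and the high connectivity of $N_y$), and adjoins $y$ to the image in $\pi_{k+1}$. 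Iterating up through $k = n - 1$ produces a manifold $F$ in the class $f$ with $\nu_F$ being $n$-connected.

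The main delicate point will be the construction of the auxiliary manifolds $N_y$ in the top degree $k+1 = n$, and verifying that connected sum with them does not disturb the lower-degree improvements already obtained; this reduces to checking the dimensional constraints underlying the double-of-disk-bundle construction and the Mayer--Vietoris argument for connectivity, all of which sit comfortably below the middle dimension. One should also check that the surgeries performed in the higher stages of the induction do not destroy surjectivity already achieved at lower degrees, but this is automatic because surgery on an $(n-1)$-sphere adds to $\pi_n$ only classes whose Gauss map is nullhomotopic.
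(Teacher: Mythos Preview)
Your proof is correct and follows essentially the same strategy as the paper's: achieve surjectivity of $\nu_F$ on homotopy groups by taking connected sums with null-bordant sphere bundles over spheres, and achieve injectivity by surgery below the middle dimension. Your doubles of disk bundles are sphere bundles $S(E\oplus\epsilon^1)$, so this is the same construction as the paper's $S(V_x)$.

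The one organisational difference is that the paper first performs \emph{all} the connected sums (securing surjectivity on $\pi_k$ for every $k\leq n$ at once) and only afterwards does the surgeries to kill the kernels for $k\leq n-1$. Your inductive scheme interleaves the two operations, which obliges you to verify at each stage that the connected sums with the $N_y$ do not disturb the injectivity already obtained on lower homotopy groups; this is what your ``preserves the $k$-connectivity of the Gauss map (by van Kampen and the high connectivity of $N_y$)'' is doing. That verification is routine, but the paper's ordering sidesteps it entirely. Your final paragraph's remark about surgery on $(n-1)$-spheres and $\pi_n$ is slightly misdirected: in your scheme the surjectivity on $\pi_n$ is established \emph{after} all surgeries, so there is nothing to check there; the genuine check is that surgery on a $k$-sphere does not alter $\pi_j$ for $j<k$, which follows from the connectivity of the handle being attached.
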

\begin{proof}
  This may be found in e.g.\ \cite[Theorem 7]{KreckUniqueDiff}. We
  only require the result for even-dimensional manifolds, which is
  much easier, so we give it here.

  For each $x \in \pi_k(BSO)$ with $k \leq n$, we can find a lift to
  $\hat{x} \in \pi_k(BSO(2n-k+1))$, i.e.\ a vector bundle
  $V_x^{2n-k+1} \to S^k$ representing this class. The sphere bundle of
  $V$, $S(V_x)$, is a $2n$-manifold and by construction has the
  property that its normal Gauss map $\nu_{S(V_x)} : S(V_x) \to BSO$
  hits the class $x$ on $\pi_k$.

  If we choose any $F'' \in f$ we may take connected-sum with $S(V_x)$
  for generators $x$ of $\pi_{* \leq n}(BSO)$ to obtain a manifold
  $F'$ such that $\nu_{F'}$ is surjective on homotopy groups in
  degrees $* \leq n$. Furthermore, $[F'']=f$ as the manifolds $S(V_x)$
  are nullbordant (they bound the associated disc-bundle). Now, we
  perform surgery on $F'$ along spheres generating the kernels of
  $\pi_k(F') \to \pi_k(BSO)$ for $k \leq n-1$. The manifold we end up
  with, $F$, has $\nu_F$ $n$-connected.
\end{proof}

\begin{proof}[Proof of Theorem \ref{thm:main}]
  Let $F$ be a manifold satisfying the conclusion of
  Lemma~\ref{lem:Const}, and let $h : F \to [0,2n]$ be a self-indexing
  Morse function.  Let
  $P = h^{-1}(n - 1/2)$ 
  and define
  \begin{align*}
    W &= h^{-1}([0, n-\frac12]),\\
    A &= h^{-1}([n-\frac12, n+\frac12]),\\
    M &= h^{-1}([n-\frac12, 2n]).
    % W &= h^{-1}([0, n+\frac12]),\\
    % A &= h^{-1}([n-\frac12, n+\frac12]),\\
    % M &= h^{-1}([n+\frac12, 2n]).
  \end{align*}
  Then $A$ is a bordism from 
  $P$ to $h^{-1}(n + 1/2)$,
  and we let $\overline{A}$ denote the bordism in the other direction
  with opposite orientation.  Then define bordisms from $P$ to itself
  by
  \begin{align*}
    K_0 &= A \cup_{h^{-1}(n+1/2)} \overline{A} \\
    K_1 &= ([0,1] \times P)\# (S^n \times S^n)\\
    K &= K_0 \cup_P K_1.
  \end{align*}
  It is easy to see that $K_0$ and $K_1$ commute, in the sense that
  there is a diffeomorphism $K_0 \cup_P K_1 \cong K_1 \cup_P K_0$,
  relative to the boundary.

  By construction, this data $(W, K)$ satisfies the assumptions
  explained in \cite[Remark 1.11]{GR-W2}\footnote{The statement of
    \cite[Theorem 1.8]{GR-W2} says $2n > 4$, but it also holds for
    $2n=2$ by the Madsen--Weiss theorem \cite{MW}, and for $2n=0$ by
    the Barratt--Priddy theorem \cite{BP}.}, and furthermore the $n$th
  stage of the Moore--Postnikov factorisation of the tangential Gauss
  map $K \to BO(2n)$ is $BSO(2n) \to BO(2n)$.  Thus the
  Pontryagin--Thom construction gives a map
  \begin{equation}\label{eq:5}
    \hocolim_{g \to \infty}  B\Diff(W \cup gK, \partial) \lra
    \Omega^\infty_0 \MT{SO}{2n}
  \end{equation}
  and it follows from \cite[Theorem 1.8]{GR-W2} that this map induces
  an isomorphism in integral homology and hence in any generalised
  homology theory.

  Now let $c\neq 0 \in H^p(\Omega^\infty_0 \MT{SO}{2n};k)$.  Since
  $H_*(\Omega^\infty_0 \MT{SO}{2n})$ is finitely generated in each
  degree, $H^*(\Omega^\infty_0 \MT{SO}{2n};k)$ is the direct limit of
  cohomology with coefficients in finitely generated subgroups of $k$,
  so $c$ is in the image of cohomology with coefficients in some
  finitely generated subgroup $k' \subset k$.  Then there exists a
  class $x \in H_p(\Omega^\infty_0 \MT{SO}{2n};\Hom(k', \bQ/\bZ))$
  with $\langle x, c\rangle \neq 0 \in \bQ/\bZ$.  We proved above that
  $x$ is in the image from homology of some
  \begin{equation*}
    B\Diff(W \cup_P gK, \partial) \simeq B\Diff(W \cup_P gK \cup_P M,
    M),
  \end{equation*}
  and since any homology class is supported on a finite subcomplex of
  a CW approximation, there exists a manifold $B$ and a map $f: B \to
  B\Diff(W \cup_P gK \cup_P M, M)$ such that $x$ is in the image of
  $f_*$.  It follows that
  \begin{equation*}
    f^*(c) \neq 0 \in H^p(B; k)
  \end{equation*}
  and hence that the map $f$ classifies a bundle $\pi: E \to B$ with
  all the required properties.
\end{proof}

%%% Local Variables: 
%%% mode: latex
%%% TeX-master: "detection"
%%% End: 

\section{Proof of Theorem \ref{thm:main2}}

The strategy of the proof will be to first solve a bordism version of
the problem of Theorem \ref{thm:main2}, and then appeal to the results
of \cite{GR-W2} to upgrade this bordism solution to a fibre bundle
solution. The bordism version does not have the restriction $d = 2n
\neq 4$, it holds in any dimension $d \geq 2$.

Recall that the classes $\kappa_c$ are defined universally in
$H^{\vert c \vert-d}(\Omega^\infty \MT{SO}{d})$, and that the map
\begin{equation*}
  \bQ[\kappa_c \, \vert \, c \in \mathcal{C}] \lra H^*(\Omega^\infty
  \MT{SO}{d};\bQ)^{\pi_0} \cong H^*(\Omega^\infty_0 \MT{SO}{d};\bQ)
\end{equation*}
to the subring of $\pi_0(\Omega^\infty \MT{SO}{d})$-invariant classes
is an isomorphism, where $\mathcal{C}$ is the set of monomials of
degree $>d$ in $H^*(BSO(d);\bQ)$ in the variables $p_1, ...,
p_{\lfloor d/2 \rfloor}$ if $d$ is odd or $p_1, ..., p_{\lfloor d/2
  \rfloor-1}, e$ if $d$ is even.

By Pontryagin--Thom theory, the infinite loop space $\Omega^\infty
\MT{SO}{d}$ classifies ``formal oriented $d$-dimensional fibre
bundles''. That is, for any smooth manifold $B$ the Pontryagin--Thom
construction gives a natural bijection
\begin{equation*}
  [B, \Omega^\infty \MT{SO}{d}] \longleftrightarrow \left\{
    \begin{array}{l}
      \pi: E \to B \,\, \text{smooth proper map,} \\
      V \to E \,\,\text{$d$-dimensional oriented vector bundle,}\\
      \varphi: TE \simeq_s V \oplus \pi^*(TB) \,\,\text{stable isomorphism.}
    \end{array}
  \right\}
\end{equation*}
between homotopy classes of maps and cobordism classes over $B$ of
formal fibre bundles.  The Miller--Morita--Mumford classes can be defined in this theory:
to $c \in H^{p+d}(BSO(d))$ and $[\pi,E,B,V,\phi] \in [B, \Omega^\infty
\MT{SO}{d}]$ we let
\begin{equation*}
  \kappa_c = \pi_!(c(V)) \in H^{p}(B),
\end{equation*}
where the Gysin map $\pi_!$ is defined using Poincar{\'e} duality in $E$ and $B$.
Under the bijection, associating the map $\alpha_\pi: B \to
\Omega^\infty\MT{SO}{d}$ to a bundle $\pi: E \to B$ with oriented
$d$-dimensional fibres corresponds to forgetting that $\pi$ is a
bundle, remembering only the stable isomorphism $TE \cong_s \pi^* TB
\oplus T_\pi E$ induced by the differential of $\pi$.  For the
universal bundle with fibre $F$ we get the map
\begin{equation*}
  B\Diff(F) \lra \Omega^\infty \MT{SO}{d}.
\end{equation*}

Writing $\Omega_*^{SO}(-)$ for oriented bordism theory, there is a
natural bijection
\begin{equation*}
  \Omega_p^{SO}(\Omega^\infty \MT{SO}{d}) \longleftrightarrow \left\{
    \begin{array}{l}
      \pi: E^{p+d} \to B^k \,\, \text{smooth proper map,} \\
      V^{d} \to E \,\,\text{oriented vector bundle,}\\
      \varphi: TE \simeq_s V \oplus \pi^*(TB) \,\,\text{stable isomorphism.}
    \end{array}
  \right\}
\end{equation*}
to the set of cobordism classes of such data, where $B$ may also
changed by a cobordism.  The data relevant for Theorem~\ref{thm:main2}
can all be extracted from this group: the bordism classes $[F]$, $[E]$,
and $[B]$, and the characteristic numbers $K_C$.  Firstly, we can
define characteristic numbers $K_C$
\begin{equation*}
  K_C(\pi, E, B, V) = \int_B \kappa_{c_1}(\pi, V) \cdots
  \kappa_{c_n}(\pi, V) \in \bZ.
\end{equation*}
It is easy to check that these characteristic numbers are invariants
of the cobordism class.  There is an obvious map
$\Omega^{SO}_p(\Omega^\infty\MT{SO}{d}) \to \Omega^{SO}_p(\ast)$ which
sends $[\pi,E,B,V,\phi]$ to $[B]$.  Similarly, sending a bordism class
$[\pi, E, B, V, \phi]$ to the bordism class $[E] \in
\Omega^{SO}_{p+d}$ gives a well defined homomorphism
\begin{equation*}
  \Omega^{SO}_p(\Omega^\infty\MT{SO}{d}) \to \Omega^{SO}_{p+d}.
\end{equation*}
We point out that this homomorphism is \emph{not} invariant under
translation to different path components, and in this section it often
is better \emph{not} to translate back to the path component of the
basepoint.
In fact, the bordism class of the fibre $[F]$ corresponds to the group
of path components of $\Omega^\infty\MT{SO}{d}$.  There is a
stabilisation map $\MT{SO}{d} \to \Sigma^{-d} \MSO$ which induces a
surjection
\begin{equation*}
  \pi_0(\Omega^\infty \MT{SO}{d}) \lra \Omega_d^{SO}.
\end{equation*}
If $f \in \Omega_d^{SO}$ is a bordism class we write
$\Omega_{(f)}^\infty \MT{SO}{d}$ for the collection of path components
which go to $f$ under this map.  If $\pi: E \to B$ is a bundle with
fibres in the bordism class $f$, then the corresponding map
$\alpha_\pi: B \to \Omega^\infty\MT{SO}{d}$ has image in
$\Omega_{(f)}^\infty \MT{SO}{d}$.

The cobordism version of Theorem \ref{thm:main2} is the problem of
finding a class
\begin{equation*}
  [\pi, E, B, V, \varphi] \in \Omega_{d+p}^{SO}(\Omega^\infty_{(f)} \MT{SO}{d})
\end{equation*}
with $[E] = e \in \Omega_{d+p}^{SO}$, $[B]=b \in \Omega_p^{SO}$ and
which maps to a given functional $K \in \Hom(H^p(\Omega^\infty_0
\MT{SO}{d}), \bZ) \subset \Hom(H^p(\Omega^\infty_0 \MT{SO}{d}), \bQ)$
under
\begin{align*}
  {\Omega_p^{SO}(\Omega^\infty \MT{SO}{d})} \lra {{H}_p(\Omega^\infty
    \MT{SO}{d};\bQ)} & \lra {H_p(\Omega^\infty_0
    \MT{SO}{d};\bQ)}\\
  & = \Hom(H^p(\Omega^\infty_0 \MT{SO}{d}), \bQ),
\end{align*}
where the last map is induced by translating to the basepoint
component.

\begin{proposition}\label{prop:CobMain2}
  Let $d \geq 2$ and $p >0$ and fix
  \begin{enumerate}[(i)]
  \item a class $f \in \Omega^{SO}_{d}$,
  \item a class $e \in \Omega^{SO}_{d+p}$,
  \item a class $b \in \Omega^{SO}_{p}$,
  \item a homomorphism $K: H^{p}(\Omega^\infty_0 \MT{SO}{d}) \to \bZ$.
  \end{enumerate}
  Suppose that these data satisfy equation~\eqref{eq:4} for each $x
  \in AP^{d+p}(d)$.  Then there exists an integer $N > 0$ and a class
  \begin{equation*}
    [\pi, E, B, V, \varphi] \in \Omega_p^{SO}(\Omega_{(f)}^\infty
    \MT{SO}{d}_+)
  \end{equation*}
  such that $[E] = Ne$, $[B]=Nb$ and $\int_B \kappa_C(\pi, V) = NK_C$
  for all sequence $C = (c_1, \dots, c_r)$ of elements of
  $\mathcal{B}$ with $p = \sum (|c_i| - 2n)$.
\end{proposition}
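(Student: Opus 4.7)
The plan is to work rationally (the conclusion permits scaling by $N$), decompose $\Omega_p^{SO}(X) \otimes \bQ$ for $X = \Omega^\infty_{(f)} \MT{SO}{d}$ via the rational Thom/Atiyah--Hirzebruch isomorphism, express the three invariants in terms of this decomposition, and then realize by linear algebra. Since all path components of the infinite loop space $\Omega^\infty \MT{SO}{d}$ are homotopy equivalent, translation identifies $H_*(X; \bQ) \cong H_*(X_0; \bQ)$, and the decomposition reads
\[
\Omega_p^{SO}(X) \otimes \bQ \cong \bigoplus_P H_{p-|P|}(X_0; \bQ),
\]
where $P$ ranges over a basis of Pontryagin monomials and a class $[B, \alpha]$ corresponds to the tuple $(w_P)_P$ with $w_P = \alpha_*(P(TB) \cap [B])$.

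In this language, the functional $K$ corresponds to $w_1 \in H_p(X_0; \bQ) \cong \Hom(H^p(X_0; \bQ), \bQ)$; the bordism class $[B]$ corresponds via Pontryagin numbers to $(w_P)_{|P|=p}$; and the bordism class $[E]$, determined rationally by its Pontryagin numbers, unfolds via $TE \cong_s V \oplus \pi^* TB$ and the coproduct $\Delta(x) = \sum x_{(1)} \otimes x_{(2)}$ into
\[
\langle x, [E] \rangle = \sum_{|x_{(2)}|=d} \langle x_{(1)}, b\rangle \langle x_{(2)}, f\rangle + \sum_{|x_{(2)}|>d} \langle \kappa_{\rho(x_{(2)})}, w_{x_{(1)}}\rangle.
\]
For $x \in AP^{p+d}(d)$ the definition of almost primitive forces every coproduct term with $|x_{(1)}|>0$ and $|x_{(2)}|>d$ to satisfy $\rho(x_{(2)}) = 0$, so only the $x_{(1)}=1$ term of the second sum survives, yielding $\langle\kappa_{\rho(x)}, w_1\rangle = K(\kappa_{\rho(x)})$ and collapsing the formula to the hypothesis~\eqref{eq:4}; this proves necessity.

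To realize a given $(e, b, K)$ satisfying~\eqref{eq:4}, I would set $w_1 = K$ (via Kronecker duality) and $(w_P)_{|P|=p}$ to encode $b$; the AP equations are then automatic. The remaining components $w_P$ for $0 < |P| < p$ must be chosen so that $\langle x, [E]\rangle = \langle x, e\rangle$ also for $x \notin AP^{p+d}(d)$. This last linear-algebraic step is the main obstacle. Since each $\kappa_c$ is primitive in the Hopf algebra $H^*(X_0; \bQ)$, the pairings $\langle \kappa_c, w_P \rangle$ depend only on the primitive parts of the $w_P$, which form a basis indexed by pairs $(P, c)$ with $c \in \mathcal{B}$ and $|c| = p - |P| + d$. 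By Proposition~\ref{prop:Class}, every non-AP monomial $x$ of degree $p+d$ admits a proper factor of degree $>d$, yielding a factorization $x = P \cdot c$ with $\rho(c) \neq 0$ that contributes a nonzero entry in the row for $x$ of the coefficient matrix. A careful triangular argument (ordering non-AP monomials appropriately, e.g.\ by the smallest degree among their Pontryagin-monomial factors) then yields full row rank and hence surjectivity of the system. Clearing denominators by a positive integer $N$ produces an integral bordism class $[\pi, E, B, V, \varphi] \in \Omega_p^{SO}(\Omega^\infty_{(f)} \MT{SO}{d}_+)$ with the desired values of $[E]$, $[B]$, and all characteristic numbers $\int_B \kappa_C(\pi, V) = NK(\kappa_C)$.
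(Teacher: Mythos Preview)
Your approach via the rational Atiyah--Hirzebruch decomposition $\Omega_p^{SO}(X)\otimes\bQ \cong \bigoplus_P H_{p-|P|}(X_0;\bQ)$ is a legitimate alternative route, and your identification of $w_1$ with $K$, of $(w_P)_{|P|=p}$ with the Pontryagin numbers of $b$, and the collapse of the formula for AP classes to hypothesis~\eqref{eq:4} are all correct.

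The gap is the ``careful triangular argument''.  Two issues.  First, you do not justify $\rho(c)\neq 0$ in your factorisation $x = P\cdot c$; this does hold if $c$ is chosen to be a \emph{minimal} proper factor of degree $>d$ (then every proper factor of $c$ has degree $\leq d$, so $c\in AP^*(d)$, and one of the paper's lemmas gives injectivity of $\rho$ on $AP$), but you do not say so.  Second and more seriously, the suggested ordering ``by the smallest degree among their Pontryagin-monomial factors'' does not produce a triangular matrix: already for $d=3$, $p=9$ the two non-AP monomials $\ph_1^3$ and $\ph_1\ph_2$ both have smallest nontrivial factor of degree $4$, so your ordering does not separate them.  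The resulting $2\times 3$ coefficient matrix does happen to have rank $2$, but not for any visible triangular reason; a genuine argument for full row rank is missing, and this is the heart of the matter.

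The paper avoids this linear algebra entirely.  After first reducing to $b=f=0$ (add a trivial bundle $B\times F\to B$ and adjust $e$), it replaces the Atiyah--Hirzebruch picture with a diagram of spectra built from the connective cover $\mathbf{mtso}(d)$ and the ring-spectrum multiplication $\mathcal{E}:\MSO\wedge\mathbf{mtso}(d)\to\Sigma^{-d}\MSO$, which on bordism extracts $[E]$.  The point is that on rational cohomology, the composite of $\mathcal{E}^*$ with projection to $H^{*\geq 1}(BSO)\otimes H^{*\geq d+1}(BSO(d))$ is \emph{by definition} the map whose kernel is $AP^*(d)$.  This immediately yields exactness of
\[
\overline{\Omega}_p^{SO}(\mathbf{mtso}(d))\otimes\bQ \lra \Omega_{p+d}^{SO}\otimes\bQ \lra AP^{p+d}(d)^\vee \lra 0,
\]
and a short diagram chase finishes.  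In effect this \emph{is} the surjectivity you need, but packaged conceptually: the cokernel is identified with $AP^\vee$ directly from the definition of almost primitivity, rather than by analysing a coefficient matrix.
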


We first show how to deduce Theorem~\ref{thm:main2} from
Proposition~\ref{prop:CobMain2}.
\begin{proof}[Proof of Theorem~\ref{thm:main2}]
  This is essentially the same proof as that of
  Theorem~\ref{thm:main2}.  With notation as in the proof of that
  theorem, the bordism class provided by
  proposition~\ref{prop:CobMain2} is in the image of some element of
  \begin{equation*}
    \Omega^{SO}_p(B\Diff(W \cup_P gK \cup_P M, M))
  \end{equation*}
  under the map in oriented bordism induced by the Pontryagin--Thom
  map
  \begin{equation*}
    B\Diff(W \cup_P gK \cup_P M, M) \lra \Omega^\infty_{(f)} \MT{SO}{2n}.
  \end{equation*}
  An element $\Omega^{SO}_p(B\Diff(W \cup_P gK \cup_P M, M))$ mapping
  to $[\pi,E,B,V,\phi]$, is given by a bundle $\pi: E \to B$ with
  fibre $W \cup_P gK \cup_P M$.
\end{proof}

Before proving Proposition~\ref{prop:CobMain2}, we first show it
suffices to establish the case when $b=0$ and $f=0$.

\begin{lemma}\label{lem:reduction}
  If Proposition \ref{prop:CobMain2} is true when $b=0$ and $f=0$ then
  it is true for arbitrary $b$ and $f$.
\end{lemma}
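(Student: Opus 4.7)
My plan is to reduce the arbitrary-data case to the case $b=f=0$ by subtracting off the contribution of ``trivial'' formal bundles. Fix closed oriented representatives $B_0 \in b$ and $F_0 \in f$. The key algebraic identity is that for every $x \in AP^{d+p}(d)$ with coproduct as in~\eqref{eq:2},
\begin{equation*}
  \langle x, b\cdot f\rangle \;=\; \sum_j \langle x_j^p, b\rangle\,\langle x_j^d, f\rangle.
\end{equation*}
I would prove this by evaluating $x$ on $T(B_0\times F_0) \cong \pi_{B_0}^*TB_0 \oplus \pi_{F_0}^*TF_0$ and using K\"unneth and multiplicativity, giving $\langle x, b\cdot f\rangle = \sum \langle x', b\rangle\langle x'', f\rangle$ over the summands of $\Delta(x)$. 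In the decomposition~\eqref{eq:2}, the term $1\otimes x$ contributes zero since $p>0$; the terms $x_j^p\otimes x_j^d$ give exactly the right-hand side; and each $a_i\otimes b_i$ contributes zero, because $b_i$ either has degree $<d$ (forcing $\langle b_i, f\rangle = 0$ on dimensional grounds) or lies in $\Ker(\rho)$ (in which case $b_i(TF_0) = \rho(b_i)(TF_0) = 0$, as the Gauss map $F_0\to BSO$ factors through $BSO(d)$).

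Granted this identity, equation~\eqref{eq:4} for $(f, e, b, K)$ is equivalent to
\begin{equation*}
  \langle x, e - b\cdot f\rangle \;=\; K(\kappa_{\rho(x)}) \quad\text{for all }x\in AP^{d+p}(d),
\end{equation*}
which is precisely~\eqref{eq:4} for the data $(0,\,e - b\cdot f,\,0,\,K)$. Invoking the assumed $b=f=0$ case of Proposition~\ref{prop:CobMain2}, I obtain $N>0$ and a class $[\pi_2, E_2, B_2, V_2, \varphi_2] \in \Omega^{SO}_p(\Omega^\infty_{(0)}\MT{SO}{d}_+)$ with $[E_2] = N(e - bf)$, $[B_2] = 0$, fibres of bordism class $0$, and $\int_{B_2}\kappa_C(\pi_2, V_2) = NK_C$ for every relevant sequence $C$.

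I would then assemble the required formal bundle over the base $B := B_2 \sqcup (NB_0)$ as the disjoint union of $\pi_2$, the trivial bundle $B_2 \times F_0 \to B_2$ over the component $B_2$, and $N$ copies of the trivial bundle $B_0 \times F_0 \to B_0$. The extra trivial bundle is added over $B_2$ so as to promote the fibre bordism class there from $0$ to $f$; over $NB_0$ the fibre class is already $f$. A trivial formal bundle $X\times F_0 \to X$ has $\kappa_c = 0$ for every $c\in\mathcal{B}$, because $c(TF_0)\in H^{|c|}(F_0)=0$ whenever $|c|>d=\dim F_0$, so its presence does not alter characteristic numbers. Also $[B_2 \times F_0] = [B_2]\cdot [F_0] = 0$. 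Tallying up: the base has class $0 + Nb = Nb$, the total space has class $N(e-bf) + 0 + N\cdot bf = Ne$, and $\int_B \kappa_C = NK_C + 0 + 0 = NK_C$, as required. The resulting formal bundle thus lies in $\Omega^{SO}_p(\Omega^\infty_{(f)}\MT{SO}{d}_+)$.

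The main conceptual point, and really the only substantive content of the argument, is that the definition of $AP^{d+p}(d)$ is engineered precisely so that the identity $\langle x, b\cdot f\rangle = \sum_j \langle x_j^p, b\rangle\langle x_j^d, f\rangle$ holds: the two conditions on the error terms ($|b_i| < d$ or $b_i\in\Ker(\rho)$) are exactly the two ways a characteristic class of the stable normal/tangent bundle can vanish on a $d$-dimensional oriented manifold. Once this match-up is in place, the reduction and reassembly are formal, and there is no real obstacle.
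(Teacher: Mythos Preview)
Your argument is correct and follows essentially the same route as the paper: reduce to the data $(0,\,e-b\cdot f,\,0,\,K)$, realise it, and then restore $b$ and $f$ by adjoining extra base components and trivial $F_0$-bundles. Your assembly $E_2 \sqcup (B_2\times F_0)\sqcup N(B_0\times F_0)$ over $B_2\sqcup NB_0$ is exactly the paper's construction, just written out componentwise. The one place you add content is the verification that $(0,\,e-b\cdot f,\,0,\,K)$ still satisfies~\eqref{eq:4}: the paper simply asserts this, while you supply the identity $\langle x, b\cdot f\rangle = \sum_j \langle x_j^p, b\rangle\langle x_j^d, f\rangle$ and explain why the definition of $AP^{d+p}(d)$ is tailored to make the $a_i\otimes b_i$ terms drop out. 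That explanation is correct and worth including.
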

\begin{proof}
  Let $(f, e, b, K_C)$ be data we wish to realise. The associated data
  $(0, e - b \cdot f,0, K_C)$ still satisfies the compatibility
  conditions so by assumption we can realise it by some $[\pi', B',
  E', V', \varphi']$.

  Then we pick some $B'' \in b$ and set $B = B \amalg B''$.  Then $B
  \in b$, and the composition $\pi: E' \to B' \hookrightarrow B$
  realises the data $(0,e - b \cdot f, b, K_C)$.  Then we pick $F \in
  f$, set $E = E' \amalg (B \times F)$, and extend $\pi$ by the
  projection $B \times F \to B$.  Since the trivial bundle $B \times F
  \to B$ has vanishing characteristic numbers, this realises $(f, e,
  b, K_C)$.
\end{proof}

It remains to prove Proposition \ref{prop:CobMain2} in the case
$b=f=0$. When $f=0$ the space $\Omega^\infty_{(0)} \MT{SO}{d}$ still
consists of many components, and we shall in fact show something
stronger, that we may find an element in
$\Omega_p^{SO}(\Omega^\infty_0 \MT{SO}{d})$, the bordism of the
basepoint component. To satisfy the requirement $b=0$ we are asking
for an element of reduced bordism $\Omega_p^{SO}(\Omega^\infty_0
\MT{SO}{d},\ast)$.

Let us write $\MSO$ for the spectrum representing oriented bordism,
and $u : \MSO \to \mathbf{H\bZ}$ for the map of spectra representing
its Thom class. Let $\overline{\MSO}$ denote the homotopy fibre of
$u$, and $\overline{\Omega}_*^{SO}(-)$ be the associated homology
theory. Let us write $\mathbf{mtso}(d)$ for the $0$-connected cover of
$\MT{SO}{d}$. We will establish a commutative diagram
\begin{equation*}
  \xymatrix{
    \overline{\Omega}_p^{SO}(\Omega_0^\infty \MT{SO}{d},\ast) \otimes \bQ
    \ar@{->>}[r]^-{\sigma} \ar@{^{(}->}[d]&
    \overline{\Omega}_p^{SO}(\mathbf{mtso}(d)) \otimes \bQ
    \ar@{^{(}->}[d] \ar@{=}[r] &
    \overline{\Omega}_p^{SO}(\mathbf{mtso}(d)) \otimes \bQ \ar[d]\\
    \Omega_p^{SO}(\Omega_0^\infty \MT{SO}{d}, \ast) \otimes \bQ
    \ar@{->>}[r]^-{\sigma} \ar@{->>}[d]&
    \Omega_p^{SO}(\mathbf{mtso}(d)) \otimes \bQ
    \ar@{->>}[r]^-{\mathcal{E}} \ar@{->>}[d] & \Omega_{p+d}^{SO}
    \otimes \bQ \ar@{->>}[d] \\
    H_p(\Omega_0^\infty \MT{SO}{d}, \ast;\bQ)
    \ar@{->>}[r]^-{\sigma} & H_p(\mathbf{mtso}(d);\bQ) \ar@{->>}[r]
    \ar@{}[ur]|{\CircNum{1}}  & AP^{p+d}(d)^\vee
  }
\end{equation*}
in which all columns are exact at the second row, and maps are
injective or surjective as indicated.  Proposition \ref{prop:CobMain2}
for the case $b = f = 0$ concerns elements of
$\Omega_p^{SO}(\Omega_0^\infty \MT{SO}{d}, \ast) \otimes \bQ$ with
specified image in $H_p(\Omega_0^\infty \MT{SO}{d}, \ast;\bQ)$ and
$\Omega_{p+d}^{SO}$, and the proof follows by an easy diagram chase
once the diagram is established.

The maps $\sigma$ are realised by maps of spectra
\begin{equation*}
  \sigma : \Sigma^\infty \Omega_0^\infty \MT{SO}{d} \lra \mathbf{mtso}(d)
\end{equation*}
coming from the $\Sigma^\infty$--$\Omega^\infty$ adjunction, so the
left-hand half of the diagram commutes. The map $\sigma$ is surjective
on any rationalised homology theory.

The map $\mathcal{E}$ is realised by
\begin{equation*}
\MSO \wedge \mathbf{mtso}(d) \to \MSO \wedge \MT{SO}{d} \to \MSO \wedge \Sigma^{-d} \MSO \overset{\mu}\to \Sigma^{-d} \MSO
\end{equation*}
where the first map comes from $\mathbf{mtso}(d)$ being the connective
cover of $\MT{SO}{d}$, the second comes from the stabilisation and the
last map comes from the multiplicative structure of $\MSO$. On
rational cohomology we have a square
\begin{equation*}
  \xymatrix{
    {H^p(\MSO \wedge \mathbf{mtso}(d);\bQ)} & &
    {H^p(\Sigma^{-d}\MSO;\bQ)} \ar[ll]_-{\mathcal{E}^*}\\
    {H^p(\mathbf{mtso}(d);\bQ)} \ar[u] & {H^{p+d}(BSO(d);\bQ)}
    \ar[l]_{\text{Thom}}^\cong & {H^{p+d}(BSO;\bQ)}
    \ar[l]_-\rho\ar[u]^{\text{Thom}}_\cong
  }
\end{equation*}
\emph{which does not commute}. Unwinding definitions, we see that the
subset $AP^{p+d}(d)$ of $H^{p+d}(BSO;\bQ)$ is the largest subspace for
which the diagram does commute. Dualising we obtain the commutative
square $\CircNum{1}$.

\begin{lemma}
The map $AP^{*}(d) \hookrightarrow H^{*}(BSO;\bQ) \overset{\rho}\to H^{*}(BSO(d);\bQ)$ is injective for $d \geq 2$.
\end{lemma}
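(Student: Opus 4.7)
The plan is to stratify $AP^*(d)$ by degree (which must be a multiple of $4$) and treat the two ranges $k \leq n$ and $k > n$ separately, where the degree is $4k$ and $n = \lfloor d/2 \rfloor$. The main input will be the explicit formula for $\rho(\ph_i)$ in terms of Pontryagin roots.

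First I would record, using the splitting of the universal rank-$d$ bundle via a maximal torus, the identity $\rho(\ph_i) = \frac{2}{(2i)!}\bigl(y_1^{2i} + \cdots + y_n^{2i}\bigr)$ for ``Pontryagin roots'' $y_1, \ldots, y_n$. Thus $\rho(\ph_1), \ldots, \rho(\ph_n)$ are, up to non-zero rational scalars, the first $n$ power sums in the $n$ variables $y_j^2$, which are well known to be algebraically independent. Consequently $\rho$ is injective on the polynomial subring $\bQ[\ph_1, \ldots, \ph_n] \subset H^*(BSO;\bQ)$.

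Next I would unpack Proposition~\ref{prop:Class}: a monomial $m = \prod_j \ph_{i_j}^{a_j}$ of weight $k = \sum i_j a_j$ (so $\deg m = 4k$) is almost primitive of order $d$ exactly when its largest proper factor, obtained by lowering the exponent of $\ph_{i_{\min}}$ by one, has degree at most $d$. This is the single inequality $i_{\min} \geq k - d/4$, with $i_{\min} = \min\{i_j : a_j > 0\}$. If $k \leq n$ then every index appearing in $m$ satisfies $i_j \leq k \leq n$, so $m$ lies in $\bQ[\ph_1, \ldots, \ph_n]$, and injectivity on $AP^{4k}(d)$ follows from the first step.

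For $k > n$ I claim the only nice monomial in weight $k$ is $\ph_k$ itself, so $AP^{4k}(d) = \bQ\cdot\ph_k$. The argument is a counting bound: writing $r = \sum a_j$ for the total number of factors, $k \geq i_{\min} \cdot r \geq (k - d/4)\,r$ forces $r \leq 4k/(4k-d)$. Demanding $r \geq 2$ rearranges to $k \leq d/2 \leq n + \tfrac12$, contradicting $k > n$ since $k$ is an integer; hence $r = 1$ and $m = \ph_k$. Finally, $\rho(\ph_k) = \frac{2}{(2k)!}\sum_{j=1}^n y_j^{2k}$ is non-zero because $n \geq 1$ (as $d \geq 2$), giving injectivity on $AP^{4k}(d)$ in this range as well. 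The only non-routine step is the counting bound ruling out multi-factor almost-primitive monomials beyond weight $n$; the remainder is a formal manipulation of power sums.
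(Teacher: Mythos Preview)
Your argument is correct and follows the same logical path as the paper's: both use Proposition~\ref{prop:Class} to see that beyond degree $2d$ the only almost-primitive monomials are the $\ph_k$ themselves (your counting bound $r \leq 4k/(4k-d)$ is just an explicit version of this observation), and then check that $\rho(\ph_k) \neq 0$. Where you work throughout with Pontryagin roots---obtaining low-degree injectivity from the algebraic independence of the first $n$ power sums in $n$ variables, and $\rho(\ph_k)\neq 0$ from the explicit formula $\rho(\ph_k) \propto \sum_j y_j^{2k}$---the paper instead uses the ideal description $\Ker(\rho) = (p_{\lfloor d/2\rfloor + 1}, p_{\lfloor d/2\rfloor + 2}, \ldots)$, noting that this ideal begins in degree $4\lfloor d/2\rfloor + 4 > 2d$ and that no $\ph_j$ lies in it because its expansion in Pontryagin classes has a non-zero $p_1^j$ term. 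The two viewpoints are interchangeable; yours is a bit more computational, the paper's a bit more algebraic.
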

\begin{proof}
  An element $x$ in the kernel is simultaneously in $AP^{*}(d)$ and
  the ideal $(p_{\lfloor d/2 \rfloor+1}, p_{\lfloor d/2 \rfloor+2},
  \ldots) \subset H^*(BSO;\bQ)$. If $x$ is non-trivial, it must then
  have degree $\vert x \vert \geq \vert p_{\lfloor d/2 \rfloor+1}\vert
  = 4 \lfloor d/2 \rfloor+4$. However, by the classification of almost
  primitives in Proposition \ref{prop:Class}, a class which is almost
  primitive of order $d$ and of degree $> 2 \cdot d$ must be an
  ordinary primitive, so $x$ must be a multiple of a Pontryagin
  character class. But no Pontryagin character class lies in the ideal
  $(p_{\lfloor d/2 \rfloor+1}, p_{\lfloor d/2 \rfloor+2}, \ldots)$ if
  $d \geq 2$, as they contain a non-trivial monomial which is a power
  of $p_1$.
\end{proof}

It follows that the two horizontal maps in the square $\CircNum{1}$
are surjective for $d \geq 2$.  We now turn to the exactness of the
columns.  The first two follows for general reasons: a cofibration
sequence of spectra induces long exact sequences of the corresponding
homology theories.  It remains to consider the third column.

\begin{lemma}
  The sequence
  \begin{equation*}
    \overline{\Omega}_p^{SO}(\mathbf{mtso}(d)) \otimes \bQ \lra
    \Omega_{k+d}^{SO} \otimes \bQ \lra AP^{k+d}(d)^\vee \lra 0 
  \end{equation*}
  is exact.
\end{lemma}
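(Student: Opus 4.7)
The plan is to dualise in finite-dimensional rational vector spaces and to match the resulting dual map with the Hopf-algebraic characterisation of $AP^{p+d}(d)$ already recorded in the square displayed immediately before the lemma. Surjectivity on the right is immediate from Thom's theorem, which identifies $\Omega^{SO}_{p+d}\otimes\bQ \cong H^{p+d}(BSO;\bQ)^\vee$ via Pontryagin numbers; under this identification, the map to $AP^{p+d}(d)^\vee$ is the dual of the inclusion $AP^{p+d}(d)\hookrightarrow H^{p+d}(BSO;\bQ)$, hence surjective.

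For exactness at $\Omega^{SO}_{p+d}\otimes\bQ$, the rational splitting of the cofibre sequence $\overline{\MSO}\to\MSO\to\mathbf{H\bZ}$ identifies
\begin{equation*}
  \overline{\Omega}^{SO}_p(\mathbf{mtso}(d))\otimes\bQ \;\cong\; H_p(\overline{\MSO}\wedge\mathbf{mtso}(d);\bQ),
\end{equation*}
so the $\bQ$-dual of $\mathcal{E}$ is a map
\begin{equation*}
  \mathcal{E}^*\colon H^{p+d}(BSO;\bQ)\lra H^p(\overline{\MSO}\wedge\mathbf{mtso}(d);\bQ),
\end{equation*}
and the required exactness reduces to proving $\ker(\mathcal{E}^*)=AP^{p+d}(d)$.

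To compute $\mathcal{E}^*$ I factor $\mathcal{E}$ as
\begin{equation*}
  \MSO\wedge\mathbf{mtso}(d) \to \MSO\wedge\MT{SO}{d} \to \MSO\wedge\Sigma^{-d}\MSO \xrightarrow{\mu} \Sigma^{-d}\MSO.
\end{equation*}
On rational cohomology these three maps induce, respectively, the coproduct $\Delta$ on $H^*(BSO;\bQ)$ (dual to Whitney sum), the map $\mathrm{id}\otimes\rho$ after Thom isomorphisms (where $\rho\colon H^*(BSO;\bQ)\to H^*(BSO(d);\bQ)$ is restriction), and the identification $H^*(\mathbf{mtso}(d);\bQ)\cong H^{>d}(BSO(d);\bQ)$ (the Thom shift combined with the connective-cover condition $j\geq 1$). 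Passage to $\overline{\MSO}$ further restricts the first tensor factor to $H^{\geq 1}(BSO;\bQ)$. The upshot is that $\mathcal{E}^*(x)$ is precisely the bidegree $(\geq 1,\geq d+1)$ component of $(\mathrm{id}\otimes\rho)(\Delta(x))\in H^*(BSO;\bQ)\otimes H^*(BSO(d);\bQ)$, which by the paper's observation just before the lemma vanishes exactly when the square displayed there commutes on $x$, i.e., when $x\in AP^{p+d}(d)$.

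Taking annihilators in the finite-dimensional pairing between $H^{p+d}(BSO;\bQ)$ and $\Omega^{SO}_{p+d}\otimes\bQ$ then gives $\mathrm{Im}(\mathcal{E})=AP^{p+d}(d)^\perp$, which is exactly the kernel of the surjection $\Omega^{SO}_{p+d}\otimes\bQ\twoheadrightarrow AP^{p+d}(d)^\vee$. The main obstacle in this plan is the identification of $\mathcal{E}^*$ in the third paragraph: several Thom isomorphisms, together with the cohomological effect of the cofibre $\overline{\MSO}$ and the $0$-connective cover $\mathbf{mtso}(d)$, must be carefully combined to produce the bidegree constraint $(\geq 1,\geq d+1)$ that matches the definition of almost primitivity. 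Everything else is formal linear duality.
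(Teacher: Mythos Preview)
Your proof is correct and follows essentially the same route as the paper: dualise, identify $\mathcal{E}^*$ on rational cohomology (via Thom isomorphisms, the multiplication-to-coproduct correspondence, restriction $\rho$, and the truncations coming from $\overline{\MSO}$ and the connective cover) with the composite $H^*(BSO)\xrightarrow{\Delta}H^*(BSO)\otimes H^*(BSO)\to H^{*\geq 1}(BSO)\otimes H^{*\geq d+1}(BSO(d))$, and conclude that $\ker(\mathcal{E}^*)=AP^{p+d}(d)$ by definition. The paper's proof is the same computation, stated more tersely.
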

\begin{proof}
  Dualising, we must show that if a class $x \in H^p(\Sigma^{-d}
  \MSO;\bQ)$ is zero under the composition
  \begin{equation*}
    \overline{\MSO} \wedge \mathbf{mtso}(d) \lra \MSO \wedge
    \Sigma^{-d} \MSO \lra \Sigma^{-d} \MSO, 
  \end{equation*}
  then considered as a class in $H^{k+d}(BSO;\bQ)$ it is almost
  primitive of order $d$. But on cohomology, after applying the Thom
  isomorphism, this composition is
  \begin{equation*}
    H^{*}(BSO) \overset{\Delta}\lra H^{*}(BSO) \otimes H^{*}(BSO)
    \overset{\text{proj}}\lra H^{* \geq 1}(BSO) \otimes H^{* \geq
      d+1}(BSO(d))
  \end{equation*}
  so if $x$ is sent to zero, it is \emph{by definition} almost
  primitive of order $d$.
\end{proof}

This finishes the proof of Proposition \ref{prop:CobMain2}.

\section{Proof of Proposition \ref{prop:Class}}

Recall that we write $\ph_i \in H^{4i}(BSO;\bQ)$ for the $i$th
Pontryagin character class, so that $H^*(BSO;\bQ) \cong \bQ[\ph_1,
\ph_2, \ldots]$.  We wish to show that the vector subspace $AP^*(d)
\subset \bQ[\ph_1, \ph_2, \ldots]$ is spanned by the monomials
$\ph_{a_1} \cdots \ph_{a_k}$ such that every proper submonomial
$\ph_{a_1} \cdots \ph_{a_{j-1}} \cdot \ph_{a_{j+1}} \cdots \ph_{a_k}$
has degree $\leq d$.  It is clear that all such monomials \emph{are}
almost primitive of degree $d$.  For a sequence $I = (i_1, i_2,
\dots)$ of non-negative integers with only finitely many non-zero
terms, we shall write
\begin{equation*}
  \ph^I = \prod_j \ph_j^{i_j}.
\end{equation*}
In this notation $\ph^I \cdot \ph^J = \ph^{I+J}$.  The $\ph^I \in
H^*(BSO;\bQ)$ form a basis, and we shall always express elements of
$H^*(BSO;\bQ)$ in this basis.

The following argument is similar to \cite[\S 6]{CCG}.  
\begin{claim}
  Every $\ph^J$ occurring as a proper factor of a monomial of $x \in
  AP^*(d)$ either has $\vert \ph^J \vert \leq d$ or $\vert \ph^J \vert
  > 4 \cdot \lfloor d/2 \rfloor$.
\end{claim}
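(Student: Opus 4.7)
The plan is to compare coefficients in the coproduct relation \eqref{eq:2} with the basis $\{\ph^K \otimes \ph^L\}$ of $H^*(BSO;\bQ)^{\otimes 2}$, placing the target factor $\ph^J$ on the \emph{right} side of the tensor. Write $x = \sum_I c_I \ph^I$ and suppose $\ph^J$ is a proper factor of some $\ph^I$ with $c_I \neq 0$; set $L = I - J$, so both $J$ and $L$ are nonzero.

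First I would observe that since each $\ph_i$ is primitive, $\Delta(\ph^I) = \sum_{K+M=I} \binom{I}{K} \ph^K \otimes \ph^M$, and consequently the coefficient of $\ph^L \otimes \ph^J$ in $\Delta(x)$ is $c_I \binom{I}{J}$, which is nonzero. The strategy is then to rule out how such a nonzero coefficient could arise from the right-hand side of \eqref{eq:2}, unless $|\ph^J|$ lies in the permitted range.

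Next I would go through each summand on the right of \eqref{eq:2}. The term $1 \otimes x$ contributes nothing since $L \neq 0$. The middle term $\sum x_j^p \otimes x_j^d$ can contribute only if $|\ph^J| = d$. The third term $\sum a_i \otimes b_i$ can contribute only if $\ph^J$ appears as a monomial of some $b_i$, and by hypothesis $b_i$ either has degree $< d$ (forcing $|\ph^J| < d$) or lies in $\Ker(\rho)$.

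The main technical point is to identify $\Ker(\rho)$ and bound the degrees of its nonzero elements. Rationally $H^*(BSO(d);\bQ)$ is polynomial on $p_1, \dots, p_{\lfloor d/2\rfloor}$ (with the Euler class $e$ adjoined when $d$ is even, satisfying $e^2 = p_{d/2}$, so $\rho(p_{d/2}) \neq 0$ in that case); hence in either parity, $\Ker(\rho) = (p_i \mid i > \lfloor d/2\rfloor) = (\ph_i \mid i > \lfloor d/2\rfloor)$, and every monomial in this ideal has degree at least $4\lfloor d/2\rfloor + 4$. So if $\ph^J$ appears with nonzero coefficient in such a $b_i$, then $|\ph^J| > 4\lfloor d/2\rfloor$. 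Combining the three cases yields $|\ph^J| \leq d$ or $|\ph^J| > 4\lfloor d/2\rfloor$. The only slightly delicate step is the identification of $\Ker(\rho)$ in the even-dimensional case, which is a quick standard check.
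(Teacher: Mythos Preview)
Your proposal is correct and follows essentially the same strategy as the paper's proof: both expand $\Delta(x)$ in the basis $\{\ph^K \otimes \ph^M\}$ and use that $\Ker(\rho)$ vanishes in degrees $\leq 4\lfloor d/2\rfloor$. The only difference is bookkeeping---you isolate the single coefficient of $\ph^L \otimes \ph^J$ and trace it to some individual $b_i$, whereas the paper fixes the left factor $\ph^I$ and collects \emph{all} terms with that left factor into a single element $\sum_{J'} \alpha_{J'}\binom{I+J'}{I}\ph^{J'}$ on the right, arguing that this entire element lies in $\Ker(\rho)$ when its degree exceeds $d$.
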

\begin{proof}
  If $\ph^I \cdot \ph^J$ occurs as a monomial in $x$ with non-zero
  coefficient, we may write
  \begin{equation*}
    x = \ph^I\cdot \bigg(\sum_{J'} \alpha_{J'} \ph^{J'}\bigg) + y,
  \end{equation*}
  where $y$ is a linear combination of monomials not divisible by
  $\ph^I$, and the sum is over multiindices $J'$ with $|\ph^{J'}| =
  |\ph^J|$ (so $\alpha_J \neq 0$).  Then
  \begin{equation*}
    \Delta(x) =  \ph^I \otimes \bigg( \sum \alpha_{J'}
    \binom{I+J'}{I} \ph^{J'}\bigg) + z,
  \end{equation*}
  where we write $\binom{K}{I} = \prod_n\binom{k_n}{i_n}$ and $z$ is a
  linear combination of terms not of the form $\ph^I \otimes
  \ph^{J''}$.  As $x$ is almost primitive, we deduce that the element
  \begin{equation*}
    \sum \alpha_{J'}
    \binom{I+J'}{I} \ph^{J'} \in H^*(BSO;\bQ),
  \end{equation*}
  which is non-zero since $\alpha_J \binom{I+J}{I} \neq 0$, either has
  degree $\leq d$ or is in the kernel of the map $\rho: H^*(BSO;\bQ)
  \to H^*(BSO(d);\bQ)$.  Since this kernel vanishes in degrees $\leq
  4\lfloor d/2\rfloor$ we have proved the claim.
\end{proof}

\begin{proof}[Proof of Proposition~\ref{prop:Class}]
  To prove the proposition, suppose for contradiction that $x \in
  AP^*(d)$ contains a monomial of the form $\ph^I \cdot \ph^J$ with
  $|\ph^I| > 0$ and $|\ph^J| > d$.  We may assume that $|\ph^J|$ is
  minimal with this property.  The claim above shows that in fact
  $|\ph^J|/4 > \lfloor d/2\rfloor$ so in particular $|\ph^J| > 2d$.  It
  follows that $\ph^J = \ph_j$ for some $j$, since any proper factor
  of $\ph^J$ must have degree $\leq d$ by minimality of $J$ and hence
  can be combined to a proper factor of degree $\in (d,2d]$
  contradicting the statement of the claim.

  We have proved that $x$ contains a monomial of the form $\ph^I\cdot
  \ph_j$ with $|\ph_j| > 2d$.  As in the proof of the claim, we may
  write
  \begin{equation*}
    x = \ph_j\cdot \bigg(\sum_{I'} \alpha_{I'} \ph^{I'}\bigg) + y,
  \end{equation*}
  where $y$ is a sum of monomials not divisible by $\ph_j$, and
  $\alpha_I \neq 0$.  Then
  \begin{equation*}
    \Delta(x) = \bigg(\sum_{I'} \alpha_{I'} (1 +
    i'_j)\ph^{I'}\bigg) \otimes \ph_j + z
  \end{equation*}
  where $z$ is a linear combination of terms not of the form $\ph^{I'}
  \otimes \ph_j$.  As $|\ph_j| > d$ and $x \in AP^*(d)$, it follows
  that $\ph_j$ is in the kernel of $H^*(BSO;\bQ) \to H^*(BSO(d);\bQ)$.
  But this is a contradiction since $d \geq 2$, and $\ph_j$ is
  non-zero even in $H^*(BSO(2);\bQ)$.
\end{proof}

\begin{remark}
  The \emph{near-primitive elements of order $d$} discussed by Church,
  Crossley and Giansiracusa are the almost primitive elements of order
  $d$ for which in addition $\sum_j x^p_j \otimes x^{d}_j=0$. We write
  $NP^*(d) \subset AP^*(d)$ for that subspace. In \cite[Theorem
  A]{CCG} they give an explicit description of the near primitive
  elements, which combined with Proposition \ref{prop:Class} shows
  that $AP^*(d) = NP^*(d+1)$.
\end{remark}

%%% Local Variables: 
%%% mode: latex
%%% TeX-master: "detection"
%%% End: 

\bibliographystyle{amsalpha}
\bibliography{biblio}

\def\cprime{$'$}
\providecommand{\bysame}{\leavevmode\hbox to3em{\hrulefill}\thinspace}
\providecommand{\MR}{\relax\ifhmode\unskip\space\fi MR }
% \MRhref is called by the amsart/book/proc definition of \MR.
\providecommand{\MRhref}[2]{%
  \href{http://www.ams.org/mathscinet-getitem?mr=#1}{#2}
}
\providecommand{\href}[2]{#2}
\begin{thebibliography}{GRW12}

\bibitem[BP72]{BP}
Michael Barratt and Stewart Priddy, \emph{On the homology of non-connected
  monoids and their associated groups}, Comment. Math. Helv. \textbf{47}
  (1972), 1--14.

\bibitem[CCG12]{CCG}
Thomas Church, Martin Crossley, and Jeffrey Giansiracusa, \emph{Invariance
  properties of {M}iller--{M}orita--{M}umford characteristic numbers of fibre
  bundles}, The Quarterly Journal of Mathematics (2012).

\bibitem[CFT12]{CFT}
Thomas Church, Benson Farb, and Matthew Thibault, \emph{On the geometric nature
  of characteristic classes of surface bundles}, J. Topol. \textbf{5} (2012),
  no.~3, 575--592.

\bibitem[Ebe09]{Ebert09}
Johannes Ebert, \emph{A vanishing theorem for characteristic classes of
  odd-dimensional manifold bundles}, arXiv:0902.4719, to appear in
  \emph{Crelle's Journal}, 2009.

\bibitem[Ebe11]{Ebert09Independence}
\bysame, \emph{Algebraic independence of generalized {MMM}-classes}, Algebr.
  Geom. Topol. \textbf{11} (2011), no.~1, 69--105.

\bibitem[Gal04]{galatius-2004}
S{\o}ren Galatius, \emph{Mod {$p$} homology of the stable mapping class group},
  Topology \textbf{43} (2004), no.~5, 1105--1132.

\bibitem[GRW12]{GR-W2}
S{\o}ren Galatius and Oscar Randal-Williams, \emph{Stable moduli spaces of high
  dimensional manifolds}, arXiv:1201.3527, 2012.

\bibitem[Kre84]{KreckUniqueDiff}
M.~Kreck, \emph{Manifolds with unique differentiable structure}, Topology
  \textbf{23} (1984), no.~2, 219--232.

\bibitem[Kre99]{Kreck}
Matthias Kreck, \emph{Surgery and duality}, Ann. of Math. (2) \textbf{149}
  (1999), no.~3, 707--754.

\bibitem[MW07]{MW}
Ib~Madsen and Michael Weiss, \emph{The stable moduli space of {R}iemann
  surfaces: {M}umford's conjecture}, Ann. of Math. (2) \textbf{165} (2007),
  no.~3, 843--941.

\end{thebibliography}

\end{document}